\numberwithin{equation}{section}
\newtheorem{thm}{Theorem}[section]
\newtheorem{lem}[thm]{Lemma}
\newtheorem{rem}{Remark}[section]
\begin{document}

	
	\def\esssup{\mathop{\rm ess\, sup}}
	
	\title[Global well-posedness 
    of 3D Euler equations with damping]
	{Well-Posedness and Asymptotic Decay of Solutions\\[1.5mm] to the Three-Dimensional Euler Equations with Damping}
	
	\author[G.-Q. Chen]{Gui-Qiang G. Chen}
	\address[G.-Q. Chen]{Mathematical Institute, University of Oxford, Oxford OX2 6GG, UK}
	\email{gui-qiang.chen@maths.ox.ac.uk}

	\author[F. Huang]{Feimin Huang}
	\address[F. Huang]{Academy of Mathematics and Systems Science, Chinese Academy of Sciences, Beijing 100190, P. R. China, and School  of  Mathematical  Sciences,  University  of  Chinese  Academy  of  Sciences,  Beijing 100049, P. R. China}
	\email{fhuang@amt.ac.cn}
	
	\author[H. Tang]{Houzhi Tang}
	\address[H. Tang]{School of Mathematics and Statistics, Anhui Normal University, Wuhu 241002, P. R.  China}
	\email{houzhitang@ahnu.edu.cn}
	
	\author[S. Zhang]{Shuxing Zhang}
	\address[S. Zhang]{School of Mathematical Sciences, Jiangsu University, Zhenjiang 212013, P. R. China}
	\email{zhangsx@ujs.edu.cn}
	
	\author[W. Zou]{Weiyuan Zou}
	\address[W. Zou]{College of Mathematics and Physics,
		Beijing University of Chemical Technology, Beijing 100029, P. R. China, and Mathematical Institute, University of Oxford, Oxford OX2 6GG, UK}
	\email{zwy@amss.ac.cn}

	\date{}

\begin{abstract}
The global well-posedness 
of the multi-dimensional compressible Euler equations with damping remains 
a longstanding open problem. 
This problem has been partially resolved in the isentropic regime ({\it i.e.}, 
the adiabatic exponent \(\gamma>1\)) for small smooth initial data (see \cite{WY, STW}). 
In this paper, we establish the global well-posedness and 
asymptotic decay of smooth solutions of the Cauchy problem of 
the three-dimensional compressible Euler equations with damping 
for the isentropic regime \(\gamma>1\) and the isothermal regime \(\gamma=1\), 
allowing for partially large initial data. More precisely, 
the \(L^2\)-norm of the initial data is allowed to be large, while the third-order Sobolev norm of the initial data is assumed to be small.
For the isentropic case, we develop a new analytical framework 
in which all required
{\it a priori} estimates of solution $(\rho,u)$ can be derived under the condition that
$\int_0^T \big( \|\nabla\rho\|_{L^\infty} + \|\nabla u\|_{L^\infty} \big) \, \mathrm{d}t$
remains sufficiently small.
Moreover, we obtain the optimal algebraic decay rates of global solutions. 
Furthermore, we study the isothermal limit of solutions of 
the isentropic regime as $\gamma \to 1$, and 
establish the global well-posedness and asymptotic decay of solutions to the isothermal Euler equations 
with damping.

\end{abstract}
\keywords{Isentropic Euler equations,  isothermal limit, global smooth solutions, well-posedness, large initial data, optimal algebraic decay.}
\subjclass[2020]{35Q31, 35B40, 35B44, 76N15}
\maketitle
	
\section{Introduction}
	
	\hspace{2em}The three-dimensional Euler equations with damping can be written as
	\begin{equation}\label{euler}
		\left\{
		\begin{aligned}
			& \partial _t \rho  + \mathrm{div}(\rho u) = 0, \\
			& \partial _t (\rho u) + \mathrm{div}(\rho u\otimes u) + \nabla P(\rho)  =-\rho u,\\
		\end{aligned}
		\right.
	\end{equation}
	which model compressible gas flow through a porous medium. Here, the unknown functions $\rho=\rho(t,x)$ and $u=u(t,x)$ denote the density and velocity of the fluid in the domain $(t,x)\in \mathbb{R}_{+}\times\mathbb{R}^3$, respectively.  The pressure $P:[0,\infty)\times\mathbb{R}^3\to\mathbb{R}$ is given by the
barotropic pressure law
$$P=P(\rho)=a_0\rho^\gamma
\qquad \mbox{\text{for $a_0>0$}},$$
where $\gamma>1$ is the adiabatic exponent, while the case $\gamma=1$
corresponds to the isothermal case.  
Without loss of generality, we normalize $a_0=1$ for simplicity.
System \eqref{euler} is equipped with the initial data:
	\begin{equation}\label{initial}
		(\rho,u)(0,x)=(\rho_{0},u_{0})(x),
	\end{equation}
	and the far-field states:
	\begin{equation}\label{far}
		\lim_{|x|\rightarrow \infty}(\rho,u)(t,x)=(\rho_*,0),
	\end{equation}
	where $\rho_*>0$ is a given constant.
	
	The global existence of solutions to the $\gamma$-law Euler equations is a challenging problem
    because the solution develops singularities, such as shocks, in finite time, regardless of how small and smooth the initial data are. For the one-dimensional case, Nishida \cite{N68} used the Glimm difference scheme to prove the global existence of BV solutions with large initial data for $\gamma = 1$ as the gas is away from the vacuum. This result was extended to the case $\gamma>1$ by Nishida-Smoller \cite{NS} when $\gamma$ is close 1.
	When vacuum occurs, DiPerna \cite{Di} first established the global existence of solutions with large initial data in $L^{\infty}$ for $\gamma=1+\frac{2}{2n+1}$, where $n\geq 2$ denotes the number of degrees of freedom
	of the molecules.
	Later, the existence problem was solved by  Ding-Chen-Luo\cite{DCL-1,DCL-2} for $\gamma\in(1,\frac{5}{3}]$,  Lions-Perthame-Tadmor \cite{LPT} 
    for $\gamma\in [3, \infty)$,
    and Lions-Perthame-Souganidis \cite{LPS} for $\gamma\in (\frac{5}{3}, 3)$.  
    The $\gamma=1$ case was treated by Huang-Wang \cite{HW}.
	On the other hand, the global existence of solutions with
    large initial data in the multidimensional (M-D) case remains open.
	
	It is known that the damping can improve the regularity of solutions; see \cite{HL,HLuo,HS,HuangP,HuangP1,N78,STW,HWU,JQ2012,XZ2024}. That is, the damping  prevents the development of singularities in some sense and leads to the global existence of smooth solutions for sufficiently small initial data.\;However, the damping cannot prevent the formation of singularities for large initial data; see \cite{LY,LY-2}. The entropy solution has to be considered; see \cite{Daf,DP,DCL-3}.
	
	It is fundamental to further study the regularity of the damping structure. In this paper, we establish the global well-posedness and asymptotic decay of smooth solutions of the Cauchy problem 
    of the M-D compressible Euler equations with damping for some partially large initial data, {\it i.e.},  $\|(\rho_0-\rho_*,u_0)\|_{L^2}$ is allowed to be large, but $\|D^3(\rho_0-\rho_*,u_0)\|_{L^2}$ is necessarily small. Our results consist of two parts. In the first part, concerning the isentropic regime ($\gamma >1$), the precise statement is following:
	\begin{thm}\label{main}
		Let the initial data $(\rho_0-\rho_*, u_0)\in H^3(\mathbb{R}^3) $ satisfy $0<\underline{\rho}\leq \rho_0\leq \bar{\rho}<\infty$ for some positive constants $\underline{\rho}$ and $\bar{\rho}$,
		\begin{equation}\label{H3}
			\|(\rho_0-\rho_*,u_0)\|_{L^2}\leq M_0 \;,\qquad \qquad \|D^3 (\rho_0-\rho_*,  u_0)\|_{L^2}\leq \delta_0,
		\end{equation}
		where  $\delta_0>0$ is small and $0<M_0\leq \delta_0^{-\frac{1}{11}}$. Then the Cauchy problem  \eqref{euler}--\eqref{far} for the isentropic case, i.e., $\gamma > 1$,  admits a unique global classical solution
		$(\rho-\rho_*, u)\in C([0,\infty);H^3(\mathbb{R}^3))$ satisfying
		\begin{align}
			&\|D^{k}(\rho-\rho_*)(t)\|_{L^2}\leq C(1+t)^{-\frac{k}{2}},\label{rhodecay} \\
			&\|D^{k} u(t)\|_{L^2}\leq C(1+t)^{-\frac{k+1}{2}},\quad\label{udecay} 
		\end{align}
		for $ k=0,1,2,3$, where   $C=C(M_0,\underline{\rho},\bar{\rho},\delta_0)$ is a positive constant independent of $t\ge 0$.
	\end{thm}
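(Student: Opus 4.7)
The plan is to exploit the isothermal structure $P(\rho)=\rho$ so that the pressure term $\nabla P(\rho)/\rho=\nabla\ln\rho$ is an exact derivative. Introducing $\phi:=\ln(\rho/\rho_*)$, the system \eqref{iso-euler} reduces to the symmetric hyperbolic system \eqref{au} with partial damping,
\begin{equation*}
\partial_t\phi+\nabla\cdot u=-u\cdot\nabla\phi,\qquad \partial_t u+\nabla\phi+u=-u\cdot\nabla u.
\end{equation*}
Local existence in $H^3$ is classical given the positivity $\underline{\rho}\le\rho_0\le\bar\rho$. The aim is then to globalize under the bootstrap assumption
$A(T):=\int_0^T(\|\nabla\phi\|_{L^\infty}+\|\nabla u\|_{L^\infty})\,\mathrm{d}t\le\varepsilon$
with $\varepsilon$ small, and simultaneously derive the decay rates \eqref{rhodecay}-\eqref{udecay}.

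The first step is the $H^3$ a priori bound. Applying $D^\alpha$ for $|\alpha|\le 3$ and pairing with $D^\alpha(\phi,u)$, the symmetric structure cancels the top-order transport and pressure terms, yielding $\frac{d}{dt}\|(\phi,u)\|_{H^3}^2+\|u\|_{H^3}^2\lesssim(\|\nabla\phi\|_{L^\infty}+\|\nabla u\|_{L^\infty})\|(\phi,u)\|_{H^3}^2$; under $A(T)\le\varepsilon$, Gronwall produces a uniform $H^3$ bound depending only on $M_0$ and $\delta_0$. Dissipation for $\phi$, absent from the direct energy, is recovered from the cross functional $\sum_{|\alpha|\le 2}\int D^\alpha\nabla\phi\cdot D^\alpha u$: substituting $-\nabla\phi=\partial_t u+u+u\cdot\nabla u$ produces $\|\nabla\phi\|_{H^2}^2$ as a positive term, so that adding a small multiple of this functional to the energy yields $\int_0^T\|\nabla\phi\|_{H^2}^2\,\mathrm{d}t<\infty$.

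The optimal algebraic decay is extracted through Duhamel's formula $(\phi,u)(t)=S(t)(\phi_0,u_0)+\int_0^tS(t-\tau)N(\tau)\,\mathrm{d}\tau$ combined with a low-high frequency decomposition of $S(t)$. The symbol equation $\lambda^2+\lambda+|\xi|^2=0$ gives $\lambda_\pm(\xi)=\tfrac12(-1\pm\sqrt{1-4|\xi|^2})$; at low frequency $\lambda_+\sim-|\xi|^2$ produces heat-type smoothing, yielding the $L^2\to L^2$ bound $\|D^k S_{\mathrm{low}}(t)(\phi_0,u_0)\|_{L^2}\lesssim(1+t)^{-k/2}M_0$, while the effective Darcy relation $\hat u\approx-i\xi\hat\phi/(1+\lambda_+)$ gives $u$ an extra half power of decay, matching \eqref{udecay}. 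At high frequency both eigenvalues have real part $-1/2$, so the high-frequency part decays exponentially and is controlled by $\|D^3(\phi_0,u_0)\|_{L^2}\le\delta_0$. The quadratic Duhamel contribution is then bounded via Gagliardo-Nirenberg using the a priori $H^3$ bound, closing a polynomial-in-$t$ decay estimate for each $\|D^k(\phi,u)(t)\|_{L^2}$.

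The main obstacle is to close the bootstrap for $A(T)$ when $M_0$ is large. Gagliardo-Nirenberg in $\mathbb{R}^3$ gives $\|\nabla\phi\|_{L^\infty}\lesssim\|\nabla\phi\|_{L^2}^{1/4}\|D^3\phi\|_{L^2}^{3/4}$ and $\|\nabla u\|_{L^\infty}\lesssim\|\nabla u\|_{L^2}^{1/4}\|D^3 u\|_{L^2}^{3/4}$, so inserting \eqref{rhodecay}-\eqref{udecay} produces integrands decaying like $(1+t)^{-5/4}$ and $(1+t)^{-7/4}$, both integrable. However, the prefactors carry positive powers of $M_0$ from the low-frequency linear semigroup, and the required smallness must be extracted from the high-frequency smallness $\delta_0$ through the same interpolation. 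Tracking the exponents through the Duhamel iteration forces a balance of the form $M_0^{a}\delta_0^{b}\lesssim\delta_0$, and the precise bookkeeping produces the threshold $M_0\le\delta_0^{-1/11}$. A continuity argument then upgrades the a priori bounds to a global solution, establishing Theorem \ref{main}.
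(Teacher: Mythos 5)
Your overall route matches the paper's: the same reformulation $a=\ln(\rho/\rho_*)$ into the symmetric hyperbolic system with partial damping, the same bootstrap quantity $\int_0^T(\|\nabla a\|_{L^\infty}+\|\nabla u\|_{L^\infty})\,\mathrm{d}t$, the same cross functional $\sum\int D^\alpha\nabla a\cdot D^\alpha u$ to extract dissipation of $a$, the same symbol/Green-function frequency decomposition, and the same observation that $u$ gains half a power of decay. However, the proposal as written has genuine gaps at the two places where the theorem is actually won.

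\emph{Closing the bootstrap.} You acknowledge that "the precise bookkeeping produces the threshold $M_0\le\delta_0^{-1/11}$" but do not carry it out, and the Gagliardo--Nirenberg splitting you state does not yield smallness: plugging the decay $\|\nabla a\|_{L^2}\lesssim N_0(1+t)^{-1/2}$ and $\|D^3 a\|_{L^2}\lesssim N_0(1+t)^{-3/2}$ into $\|\nabla a\|_{L^\infty}\lesssim\|\nabla a\|_{L^2}^{1/4}\|D^3 a\|_{L^2}^{3/4}$ gives $\int_0^T\|\nabla a\|_{L^\infty}\,\mathrm{d}t\lesssim N_0$, which is of size $\delta_0^{-1/11}$, i.e.\ \emph{large}. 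The missing step is a deliberate three-way factorization of the $L^\infty$ interpolation — say $\|\nabla a\|_{L^\infty}\lesssim\|D^2 a\|_{L^2}^{1/2}\|D^3 a\|_{L^2}^{3/8}\cdot\|D^3 a\|_{L^2}^{1/8}$ — so that one factor is bounded by the uniform-in-time smallness $\sup_t\|D^3(a,u)\|_{L^2}\le C\delta_0$ (itself a consequence of the energy estimate and the initial condition \eqref{H3}), while the remaining factors supply integrable time decay at the price of a power of $N_0<\delta_0^{-1/11}$. Only after balancing the $N_0$ power against the harvested $\delta_0$ power does one get $\int\lesssim\delta_0^{1/22}+\delta_0^{2/33}$, and this balance is exactly what forces the exponent $1/11$. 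Without displaying it, the heart of the theorem is absent. (Your choice of GN can also be made to work, but you would still need to split the $\|D^3\|^{3/4}$ factor as $\|D^3\|^{a}\|D^3\|^{3/4-a}$ with $a\in(1/12,1/6)$ and invoke $\|D^3\|\le C\delta_0$ on the first piece; the mechanism is the same.)

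\emph{Derivative loss in Duhamel.} "The quadratic Duhamel contribution is bounded via Gagliardo--Nirenberg using the a priori $H^3$ bound" does not close at the top order: the Green function estimate for the $k$-th derivative, $\|D^k G\ast f\|_{L^2}\lesssim(1+t)^{-k/2}(\|f\|_{L^2}+\|D^k f\|_{L^2})$, requires $D^3 f$ for $k=3$, and $f=u\cdot\nabla a$ or $u\cdot\nabla u$ then needs $D^4$ of $(a,u)$, which lies outside $H^3$. The paper avoids this by a hybrid argument: it first derives damping-type differential inequalities $\frac{\mathrm{d}}{\mathrm{d}t}\|(D^k a,D^k u)\|_{L^2}^2+C\|(D^k a,D^k u)\|_{L^2}^2\le C\|D^k a^\ell\|_{L^2}^2$ (for $k=3$ this requires working with the high-frequency system, as in \eqref{high-f}), and only applies Duhamel to estimate the low-frequency source $\|D^k a^\ell\|_{L^2}$, where Lemma \ref{kongzhi} lets one lower the derivative count. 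Similarly, your "effective Darcy relation" heuristic for $u$'s extra decay needs to be converted into a rigorous statement; the paper does so via the damped ODE $u=e^{-t}u_0-\int_0^t e^{-(t-\tau)}(u\cdot\nabla u+\nabla a)\,\mathrm{d}\tau$ once the decay of $\|D^k a\|_{L^2}$ is in hand, and then proves the sharp $\|D^3 u\|_{L^2}\lesssim(1+t)^{-2}$ by a separate high-frequency energy estimate plus the $G_{21}^\ell,G_{22}^\ell$ estimates, since the basic Gronwall bootstrap only delivers $(1+t)^{-3/2}$ at the top order.
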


The equation of state for isothermal gases can be regarded as the limit of that for isentropic gases as $\gamma \to 1$. Hence, the second main result of this paper addresses the isothermal limit of the isentropic damped Euler system \eqref{euler}-–\eqref{far}.

\begin{thm}[Isothermal Limit]\label{thm:gamma_to_1}
Assume that the conditions in \rm{Theorem} \ref{main} hold.  
Then, as $\gamma \to 1$, the solutions of the Cauchy 
problem \eqref{euler}--\eqref{far} for the isentropic regime ($\gamma > 1$) strongly converge to the solution of the corresponding problem for $\gamma=1$. 
Moreover, the solution of  the Cauchy problem \eqref{euler}--\eqref{far} 
for $\gamma=1$ is a unique global classical solution
$(\rho-\rho_*, u)\in C([0,\infty);H^3(\mathbb{R}^3))$ satisfying \eqref{rhodecay}--\eqref{udecay}.
\end{thm}

\begin{rem}
\rm{Theorems} \ref{main}--\ref{thm:gamma_to_1} admit the following initial data:
		\begin{align}
			\rho_0=\rho_*+\varepsilon^{\frac{5}{11}}e^{-\varepsilon^{\frac{8}{11}} |x|^2},\qquad \qquad 	u_0=\varepsilon^{\frac{5}{11}}e^{-\varepsilon^{\frac{8}{11}} |x|^2},
		\end{align}
		where $\varepsilon<\delta_0$ is  small. Indeed, $\|(\rho_0-\rho_*,u_0)\|_{L^2}= O(\varepsilon^{-\frac{1}{11}}) \rightarrow \infty \; \text{as $\varepsilon \rightarrow 0$}$, thus the $L^2$-norm of the initial data can be large.
\end{rem}	
\begin{rem}
{\rm{The decay rates  obtained in Theorems \ref{main}--\ref{thm:gamma_to_1} are optimal, which are the same as the ones of the solution to the corresponding linear system.}}
\end{rem}

The strategies of the proofs are outlined as follows: Firstly, for the isentropic case $\gamma>1$, we introduce a new function
\begin{equation}\label{transformphi}
\phi=\frac{2}{\gamma-1}\big(\sigma(\rho)-\sigma(\rho_*)\big),
\end{equation}
where $\sigma(\rho)=\sqrt{P'(\rho)}$ is the sound speed and $\sigma(\rho_*)=\bar{\sigma}$ corresponds to the sound speed at the background density $\rho_*$.  
Then system \eqref{euler}--\eqref{far} can be reformulated as a symmetric hyperbolic system with partial damping:
	\begin{align}\label{eq:isentropic}
	\begin{cases}
			\partial_t \phi+\bar{\sigma}\, {\mathrm{div}} u=-u \cdot \nabla \phi-\frac{\gamma-1}{2} \phi  \,{\mathrm{div}} u, \\[1mm]
			\partial_t u+\bar{\sigma}\, \nabla \phi+ u=-u \cdot \nabla u-\frac{\gamma-1}{2} \phi \nabla \phi .
		\end{cases}
	\end{align}
For this system, the key observation is that all  $L^2$-norm of the $k$-order derivatives of the solution can be controlled by the $k$-order derivative of the initial data. More precisely, there exists a constant $C>0$ such that
	\begin{equation}\label{d2}
		\|D^k(\phi, u)(t)\|_{L^2}\leq C\|D^k(\phi_0, u_0)\|_{L^2} \qquad \text{for \,$k=0,1,2,3,$} 
	\end{equation}
provided that 
\begin{equation} \label{aprior}
\int_0^T\big(\|\nabla \phi\|_{L^{\infty}}+\|\nabla u\|_{L^{\infty}}\big)\,\mathrm{d}t 
\end{equation}
is small. Then, by the analysis of the Green function $G_{ij}$ for the linearized system of \eqref{eq:isentropic} (see \eqref{xianxing} below), we obtain that, for any test function $\zeta$,
\begin{equation}\label{GL1}
\|D^k G_{ij}^{\ell}\ast \zeta\|_{L^2} \le 
\begin{cases}
C(1+t)^{-\frac{k}{2}}\|\zeta\|_{L^2} & \text{for \;$i+j=2$},\\[1mm]
C(1+t)^{-\frac{k+1}{2}}\|\zeta\|_{L^2}& \text{for \;$i+j=3$},\\
C(1+t)^{-\frac{k+2}{2}}\|\zeta\|_{L^2}
			\quad & \text{for \;$i+j=4$},
\end{cases}
\end{equation}
	where $G^{\ell}_{ij}$ corresponds to the low-frequency part. Based on \eqref{GL1} and the Duhamel principle, we can further obtain $$
	\|D^k\phi^\ell\|_{L^2}\le C(1+t)^{-\frac{k}{2}} \qquad \text{for \;$ k=1,2,3.$}$$
	 Using the energy method again, we have	
     \begin{equation}
		\frac{\mathrm{d}}{\mathrm{d}t}
		\|(D^k \phi,D^k u)\|_{H^{3-k}}^2
		+ \|(D^k \phi,D^k u)\|_{H^{3-k}}^2
		\leq C \|D^k \phi^{\ell}\|_{L^2}^2 \qquad \text{for \;$ k=1,2,3,$}
	\end{equation}
	which implies
	\begin{equation}
		\|D^{k}(\phi,u)(t)\|_{L^2}\leq C(1+t)^{-\frac{k}{2}} \qquad \text{for \;$ k=1,2,3.$}\label{d0}
	\end{equation}
On the other hand, since $u$ is only related to  $G_{21}$ and $G_{22}$, \eqref{GL1} hints that $\|D^ku\|_{L^2}$ should have the higher decay rate than $(1+t)^{-\frac{k}{2}}$. Indeed,  we decompose $u$ into the low-frequency part $u^{\ell}$ and the high-frequency part $u^{h}$. With the help of the damping structure, we obtain from   \eqref{GL1} and the Duhamel principle that
$$\|D^ku^\ell\|_{L^2}\le C(1+t)^{-\frac{k+1}{2}}.$$
Meanwhile, the energy estimate of the high-frequency part of \eqref{eq:isentropic} implies that $\|D^ku^h\|_{L^2}$ has better decay rate than the one for the low frequency part. Hence, we obtain a better decay rate for $D^ku$, which is optimal, since it is the same as that of the linearized equation, {\it i.e.},
	\begin{equation}
		\|D^k u(t)\|_{L^2}\leq C(1+t)^{-\frac{k+1}{2}} \qquad 
        \text{for \;$k=1,2,3.$}\label{d1}
	\end{equation}

	Finally, it remains to show that $\int_0^T(\|\nabla \phi\|_{L^{\infty}}+\|\nabla u\|_{L^{\infty}})\,\mathrm{d}t $ is small. This can be directly verified  by the optimal decay rates \eqref{d0}--\eqref{d1} under the condition that $\|D^3(\rho_0-\rho_*,u_0)\|_{L^2}$ is small, but $\|(\rho_0-\rho_*,u_0)\|_{L^2}$ can be large.

Moreover, the method for the isentropic case cannot be directly applied to the isothermal case since transform \eqref{transformphi} is singular when $\gamma=1$. This is why we need to study the isothermal limit of the isentropic damped Euler equations. The method of studying the isothermal limit is based on the compactness argument. Firstly, in the proof of Theorem \ref{main}, we can obtain a crucial by-product: the uniform estimates of solutions with respect to $\gamma\in (1,3]$. Secondly, by applying the Aubin–Lions lemma, we obtain the convergence of the solution sequence as $\gamma \to1$, which allows us to rigorously justify the isothermal limit. The global well-posedness and asymptotic decay of solutions 
to the isothermal damped Euler equations can be regarded as a direct consequence of this isothermal limit.

Furthermore, we point out that the global well-posedness and asymptotic decay of
solutions to the isothermal damped Euler equations can be established by a similar argument used in Theorem \ref{main}. To be specific, 
according to transform \eqref{transformphi}, we take the limit to obtain
\begin{align*}
\underset{\gamma \to 1}{\lim} \phi(\rho, \gamma)= \underset{\gamma \to 1}{\lim} \frac{2\sqrt{\gamma}\big(\rho^{\frac{\gamma-1}{2}} - \rho_*^{\frac{\gamma-1}{2}}\big)}{\gamma - 1}= \ln \rho - \ln \rho_*=:\hat{\phi},
\end{align*} 
and then pass to the limit in \eqref{eq:isentropic} to obtain
	\begin{equation}\label{au0}
		\left\{
		\begin{aligned}
			&\partial_t \hat{\phi} +u\cdot \nabla \hat{\phi}+\mathrm{div} u=0,\\
			&\partial_t u+u\cdot \nabla u+\nabla \hat{\phi} +u=0,
		\end{aligned}
		\right.
	\end{equation}
for $(\hat{\phi}, u)$.
It is direct to verify that this system has the same form as the following compressible isothermal Euler equations with damping:	
	\begin{equation}\label{dengwen-E}
		\left\{
		\begin{aligned}
			& \partial _t \rho  + \mathrm{div}(\rho u) = 0, \\
			& \partial _t (\rho u) + \mathrm{div}(\rho u\otimes u) + \nabla \rho  =-\rho u.\\
		\end{aligned}
		\right.
	\end{equation}
Then, after necessary modifications, we can also use a similar method used in proving Theorem \ref{main} to obtain the well-posedness and asymptotic decay
of solutions to system \eqref{dengwen-E}.
In fact, system \eqref{au0} has a simpler form than \eqref{eq:isentropic} due to fewer nonlinear terms after the new transform: $\hat{\phi}=\mathrm{ln} \rho-\mathrm{ln}\rho_*$. Then we can apply the method developed in \rm{Theorem} \ref{main}.
		
	The rest of the paper is organized as follows:
	In Section  \ref{sec.2}, we present some notation and important lemmas, which are used throughout the paper.
	In Section \ref{s1}, we establish the global well-posedness and asymptotic
    decay of the solution of the Cauchy problem \eqref{euler}--\eqref{far} for $\gamma>1$.
	In Section \ref{s2}, we obtain the isothermal limit of the isentropic case.
	
	\section{Preliminary}\label{sec.2}
	\hspace{1em}We now introduce the notation used throughout the present paper.
	The Greek letter $\alpha$ is used to denote a multi-index $\alpha=(\alpha_1,\alpha_2,\alpha_3)$ for integers $\alpha_i\geq0$, $i=1,2,3$.
	We denote by $D^{\alpha}=\partial^{\alpha_1}_{x_1}\partial^{\alpha_2}_{x_2}\partial^{\alpha_3}_{x_3}$
	the partial derivative of order $|\alpha|=\alpha_1+\alpha_2+\alpha_3.$
	In particular, we use $D^k$ to denote $D^{\alpha}$ with $|\alpha|=k$.
	$L^p(\mathbb{R}^3)$ and $W^{k,p}(\mathbb{R}^3)$ denote the usual Lebesgue and Sobolev spaces on $\mathbb{R}^3$ with norms $\|\cdot\|_{L^p}$ and $\|\cdot\|_{W^{k,p}}$, respectively.  When $p=2$, we denote $W^{k,p}(\mathbb{R}^3)$ by $H^k(\mathbb{R}^3)$ with norm $\|\cdot\|_{H^k}$. We denote by $C$ a generic positive constant that may vary in different estimates, $\tilde{C}=e^{C_1+C_2(\gamma-1)}$ to present the dependence of $\gamma-1$. $f\sim g$ means that there exists two positive constants $C_1\leq C_2$ such that $C_1|g|\leq|f|\leq C_2|g|$.
	The Fourier transform of a function $f$ is denoted by $\hat{f}$ or $\mathscr{F}[f]$ satisfying
	$$
    \hat{f}(\xi)=\mathscr{F}[f](\xi)
	=\int_{\mathbb{R}^3} f(x)e^{-i2\pi x\cdot\xi}\mathrm{d}x \qquad \text{for \;$\xi\in\mathbb{R}^3$}.
    $$
	Correspondingly, the inverse Fourier transform of $f$ is denoted by $\check{f}$ or $\mathscr{F}^{-1}[f]$ such that $$\check{f}(x)=\mathscr{F}^{- 1}[f](x)=\int_{\mathbb{R}^3} f(\xi)e^{i2\pi\xi\cdot x}\mathrm{d}\xi 
    \qquad \text{for \;$x\in\mathbb{R}^3$}.
    $$
	The low-high frequency projection operators $P_{1}$ and $P_{\infty}$  on $L^{2}$ are defined by
	\begin{equation}\label{p1pinf}
		P_{j}f=\mathscr{F}^{- 1}(\hat\chi_{j}\mathscr{F}[f])\qquad \mbox{for \;$j=1$~\text{or}~$\infty$},
	\end{equation}
	where $\hat{\chi}_{1},\;\hat{\chi}_{\infty}\in C^{\infty}(\mathbb{R}^{3})$  are the cut-off functions defined by
	\begin{equation*}
		\hat\chi_{1}(\xi)=\left\{
		\begin{array}{l}
			1 \quad\,\, \text{for $|\xi|\leq r_{0}$},\\
			0 \quad\,\, \text{for $|\xi|\geq R_0$},
		\end{array}
		\right.
		\qquad  \quad \hat\chi_{\infty}(\xi)=1-\hat\chi_{1}(\xi),
	\end{equation*}
	for constants $r_0$ and $R_0$ satisfying $0<r_0<R_0$.
	According to this decomposition,  a function  $f$ can be divided into two parts:
	\begin{align}\label{010301}
		f=f^{\ell}+f^{h},
	\end{align}
	where $f^{\ell}=P_1 f$ and $f^{h}=P_{\infty}f$ represent the low-frequency and high-frequency parts of $f$, respectively.
	
	Next, we collect some elementary inequalities and important lemmas that are used
	extensively in this paper.
	
	\begin{lem}[Moser--type calculus inequalities, {\it cf.} \cite{MB}]\label{jiaohuan}
		Let $s\in \mathbb{Z}^+$. Assume that $u,v\in H^s(\mathbb{R}^3)\cap L^{\infty}(\mathbb{R}^3)$. Then, for any multi-index $\alpha$ with $|\alpha|\leq s$,
		\begin{equation}
			\|D^\alpha(uv)\|_{L^2}
			\leq C\big(\|u\|_{L^{\infty}}\|D^s v\|_{L^2}
			+\|v\|_{L^{\infty}}\|D^s u\|_{L^2}\big).
		\end{equation}
		Moreover, if $D u\in L^{\infty}(\mathbb{R}^3)$, then
		\begin{equation}
			\|[D^\alpha, u]v\|_{L^2}
			\leq C\big(\|D u\|_{L^{\infty}}\|D^{s-1} v\|_{L^2}
			+\|v\|_{L^{\infty}}\|D^s u\|_{L^2}\big),
		\end{equation}
		where\, $[D^\alpha, u]v :=D^\alpha(uv)-u D^\alpha v$.
	\end{lem}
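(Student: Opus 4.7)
The plan is to derive both estimates from the Leibniz product rule combined with H\"older's inequality and Gagliardo--Nirenberg interpolation, which is the canonical route to Moser-type calculus inequalities.

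For the first estimate I would write
\begin{equation*}
D^\alpha(uv) = \sum_{\beta \leq \alpha}\binom{\alpha}{\beta}\, D^\beta u \, D^{\alpha-\beta} v
\end{equation*}
and bound the $L^2$ norm of each summand via H\"older with paired exponents $p_\beta = 2|\alpha|/|\beta|$ and $q_\beta = 2|\alpha|/(|\alpha|-|\beta|)$, treating the endpoints $|\beta|\in\{0,|\alpha|\}$ with the obvious $L^\infty$ factor. Gagliardo--Nirenberg on $\mathbb{R}^3$ then supplies
\begin{equation*}
\|D^\beta u\|_{L^{p_\beta}} \leq C\, \|u\|_{L^\infty}^{1-|\beta|/|\alpha|}\,\|D^{|\alpha|} u\|_{L^2}^{|\beta|/|\alpha|},
\end{equation*}
and symmetrically for the factor in $v$. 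Multiplying the two interpolation bounds and applying Young's inequality with conjugate exponents $|\alpha|/|\beta|$ and $|\alpha|/(|\alpha|-|\beta|)$ collapses each summand into the sum $\|u\|_{L^\infty}\|D^{|\alpha|}v\|_{L^2}+\|v\|_{L^\infty}\|D^{|\alpha|}u\|_{L^2}$, which is controlled by the target right-hand side since $|\alpha|\leq s$.

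For the commutator, the key observation is that in the expansion
\begin{equation*}
[D^\alpha,u]v \;=\; D^\alpha(uv) - u\, D^\alpha v \;=\; \sum_{0 < \beta \leq \alpha}\binom{\alpha}{\beta}\, D^\beta u \, D^{\alpha-\beta} v
\end{equation*}
the $\beta=0$ term cancels, so every surviving factor of $u$ carries at least one derivative. I would rerun the same H\"older/Gagliardo--Nirenberg procedure, but this time interpolating $\|D^\beta u\|_{L^{p_\beta}}$ between $\|Du\|_{L^\infty}$ and $\|D^s u\|_{L^2}$ (using $Du$ as the base, which is legitimate because $|\beta|\geq 1$) and $\|D^{\alpha-\beta}v\|_{L^{q_\beta}}$ between $\|v\|_{L^\infty}$ and $\|D^{s-1}v\|_{L^2}$. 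The two boundary contributions are handled directly: the term $\beta=\alpha$ gives $\|D^\alpha u\cdot v\|_{L^2}\leq \|v\|_{L^\infty}\|D^s u\|_{L^2}$, and the terms with $|\beta|=1$ produce $\|Du\|_{L^\infty}\|D^{|\alpha|-1}v\|_{L^2}$ after a trivial H\"older pairing. I do not anticipate any genuinely hard step; the only technical care lies in choosing the H\"older exponents so that Gagliardo--Nirenberg applies simultaneously to both factors with interpolation weights summing to one, after which the estimates are routine bookkeeping. A fully self-contained argument along these lines is presented in \cite{MB}, to which this paper defers.
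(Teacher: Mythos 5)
Your argument is essentially the proof given in the cited reference \cite{MB}---the paper itself supplies no proof, only the citation---namely: expand by Leibniz, pair $D^\beta u$ and $D^{\alpha-\beta}v$ via H\"older, interpolate each factor by Gagliardo--Nirenberg between the $L^\infty$ norm and the top-order $L^2$ seminorm, and close with Young's inequality; for the commutator you correctly observe that the $\beta=0$ term cancels, so every surviving factor of $u$ carries at least one derivative and one may interpolate $D^\beta u$ through $Du$ rather than $u$. The H\"older exponents $p_\beta=2|\alpha|/|\beta|$, $q_\beta=2|\alpha|/(|\alpha|-|\beta|)$ are exactly the ones that make the Gagliardo--Nirenberg weights sum to one, so the bookkeeping is right.

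One step needs tightening: your derivation only closes when $|\alpha|=s$. After Young's inequality you arrive at $\|u\|_{L^\infty}\|D^{|\alpha|}v\|_{L^2}+\|v\|_{L^\infty}\|D^{|\alpha|}u\|_{L^2}$, and the concluding phrase ``controlled by the target right-hand side since $|\alpha|\leq s$'' is not a valid inference for $|\alpha|<s$: on $\mathbb{R}^3$ the homogeneous seminorm $\|D^{|\alpha|}u\|_{L^2}$ is \emph{not} dominated by $\|D^s u\|_{L^2}$ (scale $u\mapsto u(\varepsilon\,\cdot)$), and the stated inequality actually fails for $|\alpha|<s$ (take $\alpha=0$ and $u=v=\phi(\varepsilon x)$ for a fixed bump $\phi$: the left side scales like $\varepsilon^{-3/2}$, the right like $\varepsilon^{s-3/2}$). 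The same constraint surfaces in your commutator argument, where the H\"older condition $1/p+1/q=1/2$ forces $(|\beta|-1)+|\alpha-\beta|=s-1$, i.e.\ $|\alpha|=s$. This is an imprecision in the lemma statement that your write-up inherits rather than a flaw you introduce---the paper only ever invokes the lemma at top order $|\alpha|=s$---but you should either restrict to $|\alpha|=s$ or replace the homogeneous seminorms on the right by inhomogeneous $H^s$ norms. (Minor typographical note: the displayed definition of the commutator should read $[D^\alpha,u]v=D^\alpha(uv)-u\,D^\alpha v$, not ``$-fD^\alpha v$''; you interpreted it correctly.)
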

	\begin{lem}[Gagliardo--Nirenberg's inequality, {\it cf.}\cite{MB}]\label{GNlem}
		Let $l,s,$ and $k$ be any real numbers satisfying $0\leq l,s<k$, and let  $p, r, q \in [1,\infty]$ and $\frac{l}{k}\leq\theta\leq 1$ such that
		\begin{equation*}
			\frac{l}{3}-\frac{1}{p}=\big(\frac{s}{3}-\frac{1}{r}\big)(1-\theta)
			+\big(\frac{k}{3}-\frac{1}{q}\big)\theta.
		\end{equation*}
		Then  
		\begin{equation}\label{GN}
			\|D^l u\|_{L^p}\leq C \|D^s u\|_{L^r}^{1-\theta}\|D^k u\|_{L^q}^{\theta} \qquad \text{for any $u\in W^{k,q}(\mathbb{R}^3)$}.
		\end{equation}	
	\end{lem}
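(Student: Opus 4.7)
The plan is to prove Lemma \ref{GNlem} via a Littlewood--Paley decomposition combined with Bernstein's inequality, which is a standard route for multiplicative interpolation estimates of this kind. The key initial observation is that both sides of \eqref{GN} are scale invariant under the rescaling $u_\lambda(x)=u(\lambda x)$ by virtue of the Sobolev scaling relation imposed on the parameters $l,s,k,p,q,r,\theta$; in particular, this invariance allows us to fix a convenient ratio between the two right-hand-side norms, equivalently to prescribe the optimal frequency split at a chosen scale.

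First I would introduce a (homogeneous) Littlewood--Paley decomposition $u=\sum_{j\in\mathbb{Z}}\Delta_j u$, where each block $\Delta_j u$ has Fourier support in an annulus $\{|\xi|\sim 2^j\}$. On such a block, derivatives act like Fourier multipliers of fixed size, $\|D^m\Delta_j u\|_{L^p}\sim 2^{jm}\|\Delta_j u\|_{L^p}$, and Bernstein's inequality yields $\|\Delta_j u\|_{L^p}\lesssim 2^{3j(1/\rho-1/p)}\|\Delta_j u\|_{L^\rho}$ whenever $\rho\le p$. Combining these produces two competing controls for $\|D^l\Delta_j u\|_{L^p}$: one that grows in $j$ and is dominated by $\|D^s u\|_{L^r}$, and another that decays in $j$ and is dominated by $\|D^k u\|_{L^q}$. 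The hypothesis $\theta\ge l/k$ together with the stated scaling relation guarantees that the two exponents have the opposite signs needed for the argument to close. Using the first bound on the low-frequency tail, the second on the high-frequency tail, and tuning the cutoff at $N^{k-s}\sim \|D^k u\|_{L^q}/\|D^s u\|_{L^r}$, I would sum the resulting geometric series to recover exactly the product $\|D^s u\|_{L^r}^{1-\theta}\|D^k u\|_{L^q}^{\theta}$; the interpolation exponent $\theta$ emerges automatically from the scaling identity stated in the lemma.

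The main obstacle will be the regimes in which $p<r$ or $p<q$, since Bernstein's inequality in the direction used above requires the target exponent to be at least as large as the source exponent. To handle such situations I would either invoke a duality argument, exploiting that Gagliardo--Nirenberg is self-dual under the conjugate-exponent substitution $(p,r,q)\mapsto(p',r',q')$, or combine Bernstein with a Sobolev embedding to lift the low source exponent to a higher one at the cost of a negligible extra derivative that can be reabsorbed into $s$ or $k$. Endpoint cases, notably $p=\infty$ or $\{r,q\}\ni\infty$, also require extra care: I would replace the Lebesgue norm on the relevant side by the Besov norm $B^{0}_{\infty,1}$, embed it into $L^\infty$, and use the strict inequality $l<k$ together with $\theta<1$ (strict whenever $p=\infty$) to guarantee summability of the resulting dyadic series.
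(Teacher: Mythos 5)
The paper does not actually prove this lemma: it is stated and cited from Majda--Bertozzi (reference \cite{MB}) as a known textbook inequality, so there is no in-paper proof for your argument to be compared against. You are therefore supplying a proof sketch for a result the authors take as given.

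As a proof sketch, your Littlewood--Paley/Bernstein route is a standard and, in the interior regime, essentially correct way to establish Gagliardo--Nirenberg. Two points deserve attention, though. First, a small computational slip: the optimal cutoff should satisfy $N^{(k-s)+3(1/r-1/q)}\sim \|D^k u\|_{L^q}/\|D^s u\|_{L^r}$, not $N^{k-s}$; the extra $3(1/r-1/q)$ in the exponent only vanishes when $r=q$. After optimization the interpolation power still comes out to $\theta$ because the ratio of the two relevant exponents is exactly $\theta$ by the stated scaling identity, so the final form \eqref{GN} is unaffected, but the intermediate formula as written is wrong in general. Second, and more seriously, the claim that Gagliardo--Nirenberg ``is self-dual under $(p,r,q)\mapsto(p',r',q')$'' is not correct: substituting conjugate exponents into the dimensional balance flips the sign in front of each $1/p,1/r,1/q$, producing a genuinely different scaling relation, so duality does not transport the inequality from the case $p\ge\max(r,q)$ to the case $p<\max(r,q)$. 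A more reliable way to handle $p<r$ (or $p<q$) is to avoid Bernstein in the unfavorable direction altogether: on each dyadic block one can simply use the trivial $L^r\hookrightarrow L^r$ bound and pay for the mismatch in Lebesgue exponents at the end via Hölder in $x$ (when the domain is bounded or after localization), or, on all of $\mathbb{R}^3$, use the alternate Bernstein estimate that trades integrability for derivatives only when $\rho\le p$ and keep the low-frequency sum in the original $L^r$ scale, closing with the nonhomogeneous decomposition. Your treatment of the $L^\infty$ endpoints via $B^0_{\infty,1}$ is fine in spirit, but note that it requires strict inequalities $0<\theta<1$ and $l<k$ to guarantee summability, and the lemma as stated permits $\theta=1$; that case reduces to a pure Bernstein/Sobolev statement and must be handled separately. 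In short: plausible outline, but the duality step is a genuine gap and the cutoff exponent needs correcting.
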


	The derivatives of low-frequency and high-frequency parts of some function $f$ satisfy the following properties,
	which are direct to verify by the definitions of low-high frequency projection operators and the Parseval equality.
	\begin{lem}\label{kongzhi}
		For any $0\leq n\leq m$,
		\begin{equation}\label{gaodi}
			\|D^{m} U^\ell\|_{L^2}\leq C\|D^{n} U^\ell\|_{L^2},
			\quad \|D^{n}U^h\|_{L^2}\leq C\|D^{m}U^h\|_{L^2},
		\end{equation}
		and
		\begin{equation}\label{gaodixiao}
			\|D^{k} U^\ell\|_{L^2}\leq C\|D^{k} U\|_{L^2},
			\quad\|D^{k}U^h\|_{L^2}\leq C\|D^{k}U\|_{L^2} \qquad \text{for any \,$k \geq0$}.
		\end{equation}
	\end{lem}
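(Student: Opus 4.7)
The statement is a standard Bernstein-type inequality for the frequency localizations defined in \eqref{p1pinf}, so the plan is to pass to the Fourier side and exploit the support properties of the cutoffs $\hat\chi_1$ and $\hat\chi_\infty$. By Plancherel's identity, for any multi-index of order $k$ one has
\begin{equation*}
\|D^k U^\ell\|_{L^2}^2 = \int_{\mathbb{R}^3} (2\pi|\xi|)^{2k}\, |\hat\chi_1(\xi)|^2\, |\hat U(\xi)|^2\, \mathrm{d}\xi,
\end{equation*}
and similarly for $U^h$ with $\hat\chi_\infty$ in place of $\hat\chi_1$. Since $\hat\chi_1, \hat\chi_\infty$ are bounded smooth functions, this immediately gives the second pair of inequalities \eqref{gaodixiao}: just pull $\|\hat\chi_j\|_{L^\infty}$ out of the integral.

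For the first pair \eqref{gaodi}, the key observation is that $\mathrm{supp}\,\hat\chi_1 \subset \{|\xi|\leq R_0\}$ while $\mathrm{supp}\,\hat\chi_\infty \subset \{|\xi|\geq r_0\}$. Thus on the support of $\hat\chi_1$ one can write $(2\pi|\xi|)^{2m} \leq (2\pi R_0)^{2(m-n)}(2\pi|\xi|)^{2n}$ for $m\geq n$, and on the support of $\hat\chi_\infty$ one has $(2\pi|\xi|)^{2n} \leq (2\pi r_0)^{-2(m-n)}(2\pi|\xi|)^{2m}$. Inserting these pointwise bounds into the Plancherel identity yields
\begin{equation*}
\|D^m U^\ell\|_{L^2}\leq C R_0^{m-n}\|D^n U^\ell\|_{L^2},\qquad \|D^n U^h\|_{L^2}\leq C r_0^{-(m-n)}\|D^m U^h\|_{L^2},
\end{equation*}
which are the desired estimates with constants depending only on $r_0, R_0, m-n$.

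There is no real obstacle here: the lemma is purely a soft Fourier-analytic fact, and the only care needed is to handle the transition region $r_0\leq |\xi|\leq R_0$ correctly, which is taken care of automatically by using $R_0$ (resp.\ $r_0$) as the relevant frequency scale in the low (resp.\ high) frequency estimate. The cutoffs $\hat\chi_1$ and $\hat\chi_\infty$ are $C^\infty$ and uniformly bounded, so no further regularity or decay assumptions on $U$ beyond $U\in L^2$ (together with enough derivatives in $L^2$ to make the right-hand side finite) are needed.
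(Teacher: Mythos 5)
Your proof is correct and matches the approach the paper indicates (the lemma is stated without proof as "easy to verify by the definitions of low-high frequency projection operators and Parseval's equality," which is precisely the Plancherel-plus-support argument you give). The pointwise bounds $|\xi|^{2m}\leq R_0^{2(m-n)}|\xi|^{2n}$ on $\mathrm{supp}\,\hat\chi_1$ and $|\xi|^{2n}\leq r_0^{-2(m-n)}|\xi|^{2m}$ on $\mathrm{supp}\,\hat\chi_\infty$ are exactly the right observations, and pulling out $\|\hat\chi_j\|_{L^\infty}$ handles \eqref{gaodixiao}.
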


Next, we present the Aubin-Lions lemma, {\it cf.} \cite{Simon}, for analyzing the isothermal limit of the isentropic damped Euler equations.

\begin{lem}[Aubin--Lions Lemma]\label{AL}
Assume that $X$, $B$, and $Y$ are Banach spaces and satisfy $X \subset B \subset Y $, with the embedding $X \hookrightarrow B$ being compact.
Define
\begin{align*}
    W = \big\{ f \,:\, f \in L^p (0, T; X), \; \partial_t f \in L^q (0, T; Y) \big\}\qquad 
    \text{for \;$1 \leq p, q \leq \infty$}.
\end{align*}
If $1 \leq p < \infty$ and $q = 1$, then $W$ is relatively compact in $L^p(0,T;B)$.
If $p = \infty$ and $q > 1$, then $W$ is relatively compact  in $C ( [0, T]; B)$.
\end{lem}
 	
	Finally, we show the local existence of the Cauchy problem  \eqref{euler}--\eqref{far}.
	Since \eqref{euler} is a symmetrizable hyperbolic system, by selecting the appropriate symmetrizer and following the proof in Section 2.1 of \cite{M}, we can establish the following local existence of the solutions.
	\begin{thm}\label{existence theorem}
		Assume that the initial data $(\rho_0-\rho_*, u_0)\in H^3(\mathbb{R}^3)$ satisfy
		\begin{equation*}
			\|(\rho_0-\rho_*, u_0)\|_{H^3}\leq M_* \,,\qquad 
		 0<\underline{\rho}\leq \rho_0\leq \bar{\rho}<\infty
		\end{equation*}
		for some positive constants $M_*$, $\underline{\rho}$, and $\bar{\rho}$.
		Then there exists a local time $T_*=T_*(M_*, \underline{\rho}, \bar{\rho})>0$ such that the Cauchy problem \eqref{euler}--\eqref{far} admits a unique classical solution $(\rho - \rho_*,u)\in C([0,T_*];H^3(\mathbb{R}^3))$.
	\end{thm}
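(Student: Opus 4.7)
The plan is to exploit the symmetrizable hyperbolic structure of \eqref{iso-euler} through the change of variable $a=\ln\rho-\ln\rho_*$, which converts the system into the manifestly symmetric form \eqref{au0}, a setting where the standard Kato--Majda local existence machinery applies cleanly. In the variables $(a,u)$ the pointwise density bound $\underline{\rho}\leq\rho_0\leq\bar\rho$ translates into an $L^\infty$ bound on $a_0$, and the initial $H^3$ bound on $(\rho_0-\rho_*,u_0)$ transfers to an $H^3$ bound on $(a_0,u_0)$ via Moser-type composition estimates together with Lemma \ref{jiaohuan}, using the fact that $\rho\mapsto\ln\rho$ is smooth on $[\underline{\rho},\bar\rho]$. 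Thus the problem is reduced to constructing a short-time $H^3$ solution to the symmetric hyperbolic system \eqref{au0} with partial damping, after which one recovers $\rho=\rho_*e^{a}$ and the required pointwise density bounds.

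Next I would construct approximate solutions via a linearized Picard iteration: given $(a^n,u^n)$, define $(a^{n+1},u^{n+1})$ as the unique solution of the linear system obtained by freezing the transport coefficient $u^n$ in the convective terms,
\begin{equation*}
\partial_t a^{n+1}+u^n\cdot\nabla a^{n+1}+\mathrm{div}\,u^{n+1}=0,\qquad
\partial_t u^{n+1}+u^n\cdot\nabla u^{n+1}+\nabla a^{n+1}+u^{n+1}=0,
\end{equation*}
with initial data $(a_0,u_0)$. This linear symmetric hyperbolic system with smooth coefficients admits a unique $H^3$ solution on any time interval where $u^n$ is regular, by a Friedrichs mollification / Galerkin argument. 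The symmetry of the principal part makes the standard $L^2$-symmetrizer $\mathrm{diag}(1,I_3)$ work, and commuting $D^\alpha$ for $|\alpha|\le 3$ through the equations and applying Lemma \ref{jiaohuan} produces the uniform a priori estimate
\begin{equation*}
\frac{\mathrm{d}}{\mathrm{d}t}\|(a^{n+1},u^{n+1})\|_{H^3}^2
\leq C\bigl(1+\|(a^n,u^n)\|_{H^3}\bigr)\|(a^{n+1},u^{n+1})\|_{H^3}^2,
\end{equation*}
the damping term only helping. A standard bootstrap then yields a time $T_*=T_*(M_*,\underline\rho,\bar\rho)>0$ on which $\|(a^{n+1},u^{n+1})\|_{H^3}\le 2\|(a_0,u_0)\|_{H^3}$ uniformly in $n$.

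To pass to the limit I would show $(a^n,u^n)$ is Cauchy in the lower-regularity space $C([0,T_*];L^2)$: the difference $(\delta a^{n+1},\delta u^{n+1})=(a^{n+1}-a^n,u^{n+1}-u^n)$ satisfies a linear symmetric hyperbolic system with source terms $-\delta u^n\cdot\nabla a^n$ and $-\delta u^n\cdot\nabla u^n$, and a plain $L^2$ energy estimate, using the already established uniform $H^3$ bound (hence $L^\infty$ bound on $\nabla a^n,\nabla u^n$), gives a Gronwall-type contraction after possibly shrinking $T_*$. Interpolation with the uniform $H^3$ bound upgrades convergence to $C([0,T_*];H^s)$ for every $s<3$, which is enough to pass to the limit in the equations. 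Weak-$\ast$ compactness places the limit in $L^\infty(0,T_*;H^3)$, and the standard Bona--Smith argument (or directly exploiting the symmetric hyperbolic structure via strong continuity of the norm) promotes this to $C([0,T_*];H^3)$. Uniqueness follows from the same $L^2$ difference estimate applied to two solutions. The density bounds $\underline\rho/2\le\rho\le 2\bar\rho$ are preserved on a possibly smaller interval via $\rho=\rho_*e^a$ together with the Sobolev embedding $H^3\hookrightarrow L^\infty$.

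The main technical obstacle is the low-norm contraction step: one must be careful that the $L^2$ estimate for the difference does not lose regularity, which is why the full symmetric hyperbolic structure of \eqref{au0} rather than of the original $(\rho,\rho u)$ variables is essential. All remaining steps are routine once the symmetrization in the logarithmic variable is in place, which is exactly the structural observation the paper exploits throughout.
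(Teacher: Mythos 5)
Your proposal is a faithful unpacking of exactly what the paper invokes: the paper proves this theorem simply by citing the Kato--Majda theory for symmetrizable hyperbolic systems (Section 2.1 of Majda's book), and your Picard iteration with frozen transport coefficient, uniform $H^3$ a priori bound, low-norm $L^2$ contraction, interpolation, and Bona--Smith continuity argument is precisely that standard machinery. The only cosmetic difference is that you symmetrize via the logarithmic change of variables $a=\ln\rho-\ln\rho_*$ (the same reformulation the paper itself uses from Section 3 onward), whereas the paper's one-line citation leaves the choice of symmetrizer in the $(\rho,u)$ variables implicit; both are equivalent routes through Majda's framework.
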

	
	\section{Global Well-Posedness And Asymptotic Decay
    For The Isentropic Case: $\gamma>1$}\label{s1}
	\subsection{Regularity Criterion}\label{sec.3}
	In this section, we establish the following regularity criterion for the Cauchy problem \eqref{euler}--\eqref{far}.
	
	\begin{thm}\label{thm3.1}
		Assume that the initial data $(\rho_0-\rho_*, u_0)\in H^3(\mathbb{R}^3)$.
		For any $T>0$, let $(\rho,u)\in C([0,T];H^3(\mathbb{R}^3))$ be a local solution of the Cauchy problem \eqref{euler}--\eqref{far} and $\phi$ as defined in \eqref{transformphi}.
		If
		\begin{equation}\label{priori}
			\int_0^T\big(\|\nabla \phi\|_{L^{\infty}}+\|\nabla u\|_{L^{\infty}}\big)\,\mathrm{d}t\leq \delta
		\end{equation}
		holds for some small constant $\delta>0$, then the solution $(\rho,u)$ can be extended beyond $T$.
	\end{thm}
	\begin{proof}
We prove the theorem under the {\it a priori} assumption \eqref{priori}. 
The proof is divided into three steps: first, we derive uniform upper and lower bounds for the density; second, we establish an $H^3$ energy estimate for $(\phi,u)$; finally, we use the equivalence between $\phi$ and $\rho-\rho_*$ to obtain the corresponding estimate for $(\rho-\rho_*,u)$.
    
\smallskip
{\bf 1.  Uniform bounds for the density.}
Under ssumption \eqref{priori}, the density $\rho(t,x)$ satisfies
		\begin{equation}\label{rho}
			e^{-\delta}\underline{\rho} \leq\rho(t,x)\leq e^{\delta}\overline{\rho} 
            \qquad \text{for any $t\in[0,T]$}.
		\end{equation}
     This can be derived as follows:
		Define the particle trajectory $X(\tau;s,y)$ passing through $(s,y)$ by
		\begin{equation*}
			\left\{
			\begin{aligned}
				&\frac{\mathrm{d}X(\tau;s,y)}{\mathrm{d}\tau}=u(\tau; s,y),\\
				&X(s;s,y)=y.
			\end{aligned}
			\right.
		\end{equation*}
		It follows from \eqref{euler}$_1$ that, along the particle trajectory $X(\tau;s,y)$, the directional derivative of the density is
		\begin{equation*}
		\frac{\mathrm{d}\rho(\tau,X(\tau;s,y))}{\mathrm{d}\tau}=-\rho(\tau,X(\tau;s,y))  \mathrm{div}\,u(\tau,X(\tau;s,y)).
		\end{equation*}
		Solving the above equation yields 
		\begin{equation*}
			\rho(t,X(t;s,y))=\rho(0,X(0;s,y))e^{-\int_0^t  \mathrm{div}\, u(\tau,X(\tau;s,y))\mathrm{d} \tau} \qquad \text{for any $t\in[0,T]$}.
		\end{equation*}
		Then, combining \eqref{priori} and $\rho_0\in[\underline{\rho},\bar{\rho}]$, we obtain \eqref{rho}.   

\smallskip
{\bf 2. Energy estimates for $(\phi,u)$.}
We next establish an $H^3$ estimate for $(\phi,u)$. After a normalization, we may assume without loss of generality that $\rho_*=1$ in Section \ref{s1}. By the uniform bounds in \eqref{rho} and the definition of $\phi$ in \eqref{transformphi},   system \eqref{euler}--\eqref{far} can be reformulated as
	\begin{align}
		\left\{\begin{array}{l}
			\partial_t \phi+\sqrt{\gamma} \,{\mathrm{div}} u=-u \cdot \nabla \phi-\frac{\gamma-1}{2} \phi  \mathrm{div}\,u, \\[1mm]
			\partial_t u+\sqrt{\gamma} \,\nabla \phi+ u=-u \cdot \nabla u-\frac{\gamma-1}{2} \phi \nabla \phi,
		\end{array}\right.\label{au1}
	\end{align}
	with the initial data:
	\begin{equation}\label{initial-au}
		(\phi,u)|_{t=0}=(\phi_0, u_{0})\qquad \,\,
        \mbox{for  $\phi_0= \frac{2}{\gamma-1}(\sigma(\rho_0)-\sigma(1))$},
	\end{equation}
	and the far-field states:
	\begin{equation}\label{far-au}
		\lim_{|x|\rightarrow \infty}(\phi,u)=(0,0).
	\end{equation}
		
Under assumption \eqref{priori}, we have
		\begin{equation}\label{aH3}
			\sup\limits_{t\in[0,T]}\|(\phi,u)(t)\|^2_{H^3}+\int_0^T\|u(t)\|^2_{H^3} \,\mathrm{d}t \leq  e^{(C+(\gamma-1)\hat{C})}\|(\phi_0,u_0)\|^2_{H^3},
		\end{equation}
        where $C$  and $\hat{C}$ are positive constants independent of $\gamma-1$ and time. 
        
	Indeed, we first take the $L^2$-inner product of \eqref{au1} with $(\phi,u)$, which yields
		\begin{align}
			&\frac{1}{2}\frac{\mathrm{d}}{\mathrm{d}t}\|(\phi,u)\|^2_{L^2}+\|u\|^2_{L^2}\notag\\
			&=- \int_{\mathbb{R}^3} \phi\, u\cdot\nabla\phi\, \mathrm{d}x
			- \int_{\mathbb{R}^3} (u\cdot\nabla u) \cdot u\, \mathrm{d}x
			- \frac{\gamma-1}{2} \int_{\mathbb{R}^3} \big(\phi^2 \mathrm{div}u+u\cdot\phi\nabla\phi \, \big)\mathrm{d}x \notag\\
			&\leq \frac12\|\mathrm{div}u\|_{L^\infty}\|\phi\|^2_{L^2}
			+\|\nabla u\|_{L^\infty}\|u\|^2_{L^2}+\frac{\gamma-1}{4}\|\nabla u\|_{L^\infty}\|\phi\|^2_{L^2} \notag\\
			&\leq (\frac32+\frac{\gamma-1}{4})\|\nabla u\|_{L^\infty}(\|\phi\|^2_{L^2}+\|u\|^2_{L^2}). \label{al21}
		\end{align}
		For higher-order estimates, applying $D^{k}$, $k=1,2,3$,  to \eqref{au1} yields
		\begin{equation}\label{akuk1}
			\left\{
			\begin{aligned}
				&\partial_t D^k\phi + u\cdot\nabla D^k\phi + \sqrt{\gamma}\;\mathrm{div}D^k u
				= -[D^k,u\cdot\nabla]\phi - \tfrac{\gamma-1}{2}\,[D^k,\phi]\,\mathrm{div}u
				- \tfrac{\gamma-1}{2}\,\phi\, D^k\mathrm{div}u,\\
				&\partial_t D^k u + u\cdot\nabla D^k u + \sqrt{\gamma}\,\nabla D^k\phi + D^k u
				= -[D^k,u\cdot\nabla]u - \tfrac{\gamma-1}{2}\,[D^k,\phi]\nabla\phi
				- \tfrac{\gamma-1}{2}\,\phi\, D^k\nabla\phi.
			\end{aligned}
			\right.
		\end{equation}
		Taking the $L^2$-inner product of \eqref{akuk1} and $(D^{k}\phi,D^{k} u)$, and using Lemma \ref{jiaohuan}, we deduce
		\begin{align}
			&\frac{1}{2}\frac{\mathrm{d}}{\mathrm{d}t}\|(D^{k} \phi,D^{k} u)\|_{L^2}^2
			+\|D^{k} u\|_{L^2}^2 \nonumber\\
			&=\int_{\mathbb{R}^3} D^k \phi \cdot \big(-[D^k,u\cdot\nabla]\phi - u\cdot\nabla D^k\phi- \tfrac{\gamma-1}{2}\,[D^k,\phi]\,\mathrm{div}u
				- \tfrac{\gamma-1}{2}\,\phi\, D^k\mathrm{div}u\big)\mathrm{d}x \nonumber\\
                &\quad + \int_{\mathbb{R}^3} D^k u \cdot \big(-[D^k,u\cdot\nabla]u-u\cdot\nabla D^k u- \tfrac{\gamma-1}{2}\,[D^k,\phi]\nabla\phi
				- \tfrac{\gamma-1}{2}\,\phi\, D^k\nabla\phi\big)\mathrm{d}x \nonumber\\
			&\leq  C\big(\|\nabla u\|_{L^\infty} \|D^k \phi\|_{L^2}^2+\|\nabla \phi\|_{L^\infty}\|D^k\phi\|_{L^2}\|D^k u\|_{L^2}\big)+C\|\nabla u\|_{L^\infty}\|D^k\phi\|_{L^2}^2 \nonumber\\
            &\quad+C (\gamma-1)\big(\|\nabla \phi\|_{L^\infty} \|D^k\phi\|_{L^2}\|D^k u\|_{L^2}+\|\nabla u\|_{L^\infty}\|D^k\phi\|_{L^2}^2\big) \nonumber\\
            &\quad+C(\gamma-1)(\|\nabla \phi\|_{L^\infty} \|D^k\phi\|_{L^2}\|D^k u\|_{L^2}+\|\nabla u\|_{L^\infty}\|D^ku\|_{L^2}^2)\nonumber\\
            &\quad-\frac{\gamma-1}{2}\int_{\mathbb{R}^3}\phi \big(D^k\mathrm{div}u D^k\phi+ D^k\nabla\phi\cdot D^ku\big)\mathrm{d}x  \nonumber\\
			&\leq C\|\nabla u\|_{L^\infty}\big( \|D^k \phi\|_{L^2}^2+\|D^ku\|_{L^2}^2\big)
             \nonumber\\
             &\quad+\hat{C}(\gamma-1)(\|\nabla \phi\|_{L^{\infty}}+\|\nabla u\|_{L^{\infty}})\big( \|D^k \phi\|_{L^2}^2+\|D^ku\|_{L^2}^2\big),
			\label{aualpha1}
		 \end{align} 
		where we have used the fact that
			\begin{align}
				\int_{\mathbb{R}^3}\phi \big(D^k(\mathrm{div} u) D^k\phi+ D^k\nabla\phi\cdot D^ku\big)\mathrm{d}x
				&=\int_{\mathbb{R}^3}\phi \,\mathrm{div}(D^k\phi  D^ku)\mathrm{d}x\nonumber\\
				&=-\int_{\mathbb{R}^3}\nabla \phi \cdot (D^k\phi  D^ku)\mathrm{d}x\nonumber\\
				&\leq C\|\nabla\phi\|_{L^\infty}
                \big(\|D^k\phi\|_{L^2}^2+\|D^ku\|_{L^2}^2\big).\label{remind}
			\end{align}
		Summing the above inequality with respect to $k$  from $0$ to $3$ gives
		\begin{equation*}
			\frac{1}{2}\frac{\mathrm{d}}{\mathrm{d}t}\|(\phi,u)\|_{H^3}^2
			+\|u\|_{H^3}^2\leq (C+(\gamma-1)\hat{C})(\|\nabla \phi\|_{L^{\infty}}+\|\nabla u\|_{L^{\infty}})\|(\phi,u)\|_{H^3}^2.
		\end{equation*}
		Using the Gr\"{o}nwall inequality and 
        assumption \eqref{priori}, we obtain
		\begin{equation*}
			\sup\limits_{t\in[0,T]}\|(\phi,u)(t)\|_{H^3}^2+\int_0^T\|u(t)\|_{H^3}^2 \,\mathrm{d}t
			\leq e^{C\delta+(\gamma-1)\hat{C}\delta}\|(\phi_0,u_0)\|_{H^3}^2,
		\end{equation*}
     which together with the smallness of $\delta$ yields \eqref{aH3}.
     
\smallskip    
{\bf 3. Equivalence between $\phi$ and $\rho-1$.}
For simplicity, we write $\tilde{C}=e^{C+\hat{C}(\gamma-1)}$ for brevity. The explicit dependence on $\gamma-1$ plays a key role in proving the isentropic limit of the solutions. 
We first prove that, under assumption \eqref{priori}, 
\begin{align}
			&\sup\limits_{t\in[0,T]}\|(\rho-1,u)(t)\|^2_{H^3}+\int_0^T\|u(t)\|^2_{H^3} \,\mathrm{d}t \leq  \tilde{C}\|(\rho_0-1,u_0)\|^2_{H^3},\label{rhouH3}\\[1mm]
	&\sup\limits_{t\in[0,T]}\|D^3(\rho-1,u)(t)\|_{L^2}\leq  \tilde{C}\delta_0.\label{d3rho}
\end{align}

In fact, we need to prove
\begin{equation}\label{equiv-phi-rho}
    \|D^k \phi\|_{L^2} \sim \|D^k(\rho-1)\|_{L^2}
    \qquad \text{for ~\,$k=0,1,2,3$}.
\end{equation}
Recall that
\begin{equation}\label{phi-def}
    \phi=\frac{2\sqrt{\gamma}}{\gamma-1}
    \bigl(\rho^{\frac{\gamma-1}{2}}-1\bigr).
\end{equation}

For \(k=0\), by the mean value theorem, there exists \(\tilde\rho\) between \(1\) and \(\rho\) such that
\begin{equation}\label{phi-rho-mvt}
    \phi
    =\sqrt{\gamma}\,\tilde\rho^{\frac{\gamma-3}{2}}(\rho-1).
\end{equation}
Since $\rho\in
    \bigl(e^{-\delta}\underline\rho,e^{\delta}\overline\rho\bigr)$, then \(\tilde\rho\) is uniformly bounded from above and below. It follows immediately that
\begin{equation}\label{phi-rho-l2}
    \|\phi\|_{L^2}\sim \|\rho-1\|_{L^2}.
\end{equation}

For the first-order derivatives, differentiating \eqref{phi-def}, we obtain
\begin{equation}\label{grad-phi}
    D\phi=\sqrt{\gamma}\,\rho^{\frac{\gamma-3}{2}}D\rho.
\end{equation}
Again, by the uniform upper and lower bounds of \(\rho\), we deduce
\begin{equation}\label{d1-equiv}
    \|D\phi\|_{L^2}\sim \|D\rho\|_{L^2}.
\end{equation}

We now turn to the second-order derivatives. Differentiating \eqref{grad-phi} once more yields
\begin{equation}\label{rho-second}
   D^2\rho
=\frac{1}{\sqrt{\gamma}}\,\rho^{\frac{3-\gamma}{2}}D^2\phi
+\frac{3-\gamma}{2\sqrt{\gamma}}\,
\rho^{\frac{\gamma-5}{2}} (D\rho)^2,
\end{equation}
so that
\begin{equation}\label{rho-second-est-1}
    \|D^2\rho\|_{L^2}
    \le C\big(\|D^2\phi\|_{L^2}
    +\|D \rho\|_{L^4}^2\big).
\end{equation}
Similarly, from \eqref{rho-second},
\begin{equation}\label{phi-second-est-1}
    \|D^2\phi\|_{L^2}
    \le C\big(\|D^2\rho\|_{L^2}
    +\|D\rho\|^2_{L^4}\big).
\end{equation}

It remains to control $\|D\rho\|_{L^4}^2$. By \eqref{grad-phi}, the Gagliardo--Nirenberg inequality, and \eqref{aH3},
\begin{equation} \label{quad-est-phi}
      \|D\rho\|_{L^4}^2 \le C\|D\phi\|_{L^4}^2 \le C\|D\phi\|_{L^2}^{\frac{1}{2}}\|D^2\phi\|_{L^2}^{\frac{3}{2}}
    \le \widetilde C\|D^2\phi\|_{L^2}.
\end{equation}
On the other hand,
\begin{equation}\label{quad-est-rho}
     \|D\rho\|_{L^4}^2 \le C\|D\rho\|_{L^2}^{\frac{1}{2}}\|D\rho\|_{L^6}^{\frac{3}{2}} \le C\|D\phi\|_{L^2}^{\frac{1}{2}}\|D^2\rho\|_{L^2}^{\frac{3}{2}}
    \le \widetilde C\|D^2\rho\|_{L^2}.
\end{equation}
    
Substituting \eqref{quad-est-phi}--\eqref{quad-est-rho} into
\eqref{rho-second-est-1}--\eqref{phi-second-est-1}, we have
\begin{equation}\label{d2-equiv}
    \|D^2\phi\|_{L^2}\sim \|D^2\rho\|_{L^2}.
\end{equation}

For the third-order derivatives, one differentiates \eqref{grad-phi} once more. The resulting expression consists of the leading linear term \( \rho^{\frac{\gamma-3}{2}}D^3\rho \) and the lower-order nonlinear terms involving the products of derivatives of \(\rho\) up to second order. Using the already established bounds \eqref{d1-equiv} and \eqref{d2-equiv}, together with the uniform bounds on \(\rho\), these lower-order terms can be controlled in the same way as above. Therefore, we have
\begin{equation}\label{d3-equiv}
    \|D^3\phi\|_{L^2}\sim \|D^3\rho\|_{L^2}.
\end{equation}
Combining \eqref{phi-rho-l2}, \eqref{d1-equiv}, and \eqref{d2-equiv}--\eqref{d3-equiv}, we obtain \eqref{equiv-phi-rho}.

Next, recalling the initial assumption in \eqref{H3},
\begin{equation}\label{h3-assumption-proof}
    \|D^3(\rho_0-\rho_*,u_0)\|_{L^2}\le \delta_0,
\end{equation}
it follows from \eqref{equiv-phi-rho} that
\begin{equation}\label{d3phi-small}
    \sup_{t\in[0,T]}\|D^3(\phi,u)(t)\|_{L^2}\le \tilde{C}\delta_0.
\end{equation}
Together with \eqref{priori}, this yields \eqref{d3rho}.

Finally, using \(M_0\le \delta_0^{-1/11}\), the Gagliardo--Nirenberg inequality gives
\begin{align}
    &\|Du\|_{L^2}
    \le C\|u\|_{L^2}^{\frac{2}{3}}\|D^3u\|_{L^2}^{\frac{1}{3}} 
    \le \tilde{C} M_0^{\frac{2}{3}}\delta_0^{\frac{1}{3}}
    \le \tilde{C}\delta_0^{\frac{3}{11}}, \label{010101}\\
 &\|D^2u\|_{L^2}
    \le C\|u\|_{L^2}^{\frac{1}{3}}\|D^3u\|_{L^2}^{\frac{2}{3}} 
    \le \tilde{C} M_0^{\frac{1}{3}}\delta_0^{\frac{2}{3}}
    \le \tilde{C}\delta_0^{\frac{7}{11}}. \label{010102}
\end{align}

\rm{Theorem} \ref{thm3.1} follows directly from \eqref{rhouH3}.       
	\end{proof}
	
	\subsection{Decay Rates of the Solution}\label{sec.4}
	In this section, we establish the optimal decay rates of the solution and then employ these to prove Theorem \ref{main}.
	
	Let $U=(\phi,u)^{\top}$ and $U_0=(\phi_0,u_0)^{\top}$. We rewrite system \eqref{au1}--\eqref{initial-au} as the following vector form
	\begin{equation}\label{L-system-M}
		\left\{
		\begin{aligned}
			& \partial_t U +\mathcal{L}U = F,\\
			& U(0,x) = U_0,
		\end{aligned}
		\right.
	\end{equation}
	where  $\mathcal{L}$ and $F$ are given by
	\begin{equation*}
		\mathcal{L} = \left( {\begin{array}{*{20}{c}}
				{0}&{{\sqrt{\gamma}\mathrm{div}}}\\
				{\sqrt{\gamma}\nabla}&{I}
		\end{array}} \right),
		\qquad \quad
		F=\left(
		\begin{array}{*{20}{c}}
			{-u \cdot \nabla \phi-\frac{\gamma-1}{2} \phi  {\mathrm{div}} u}\\
			{-u \cdot \nabla u-\frac{\gamma-1}{2} \phi\nabla \phi}
		\end{array} \right)
		:=\left(
		\begin{array}{*{20}{c}}
			{f_1}\\
			{f_2}
		\end{array} \right).
	\end{equation*}
	The solution  of \eqref{L-system-M} can be expressed as
	\begin{equation}\label{duhamel}
		U=G\ast U_0 +\int^t_0 G(t-\tau)\ast F(\tau)\mathrm{d}\tau,
	\end{equation}
	where \begin{equation*}
		G(t,x) = \left(
		{\begin{array}{*{20}{c}}
				{G_{11}}&{G_{12}}\\
				{G_{21}}&{G_{22}}
		\end{array}} \right)
	\end{equation*}
	denotes the Green function of the linear operator $\mathcal{L}$.\;Although the Green function have been investigated in \cite{STW}, we still present some additional properties of the Green function and the process of proof for completeness.
	
	The Green function $G$ satisfies the following linear system:
	\begin{equation}\label{xianxing}
		\left\{
		\begin{aligned}
			& \partial_t G +\mathcal{L}G = 0,\\
			& G(0,x) = \delta(x)I,
		\end{aligned}
		\right.
	\end{equation}
	where $\delta(x)$ is the standard Dirac delta function. Taking the Fourier transform of the above system yields
	\begin{equation}
		\left\{
		\begin{aligned}
			& \partial_t \hat{G} +\hat{\mathcal{L}}\hat{G} = 0,\\
			& \hat{G}(0,\xi) = I,
		\end{aligned}
		\right.
	\end{equation}
	where
	\begin{equation*}
		\hat{\mathcal{L}}_\xi = \left( {\begin{array}{*{20}{c}}
				{0}&{i \sqrt{\gamma}\xi^{\top}}\\
				{i \sqrt{\gamma}\xi}&{I}
		\end{array}} \right).
	\end{equation*}
	We can calculate the eigenvalues of $\hat{\mathcal{L}}$ from
	\begin{equation*}
		|\lambda I+\hat{\mathcal{L}}_\xi|{\rm{ = }}\left| {\begin{array}{*{20}{c}}
				{\lambda}&{i\sqrt{\gamma}\xi}\\
				{i\sqrt{\gamma}\xi}&({\lambda+1})I\\
		\end{array}} \right|
		=(\lambda+1)^2(\lambda^2+\lambda+\gamma|\xi|^2)=0,
	\end{equation*}
	to obtain
	\begin{equation*}
		\lambda_1=\lambda_2=-1,
		\quad \lambda_3=-\frac{1-\sqrt{1-4\gamma|\xi|^2}}{2}, \qquad 
		\lambda_4=-\frac{1+\sqrt{1-4\gamma|\xi|^2}}{2}.
	\end{equation*}
	For the eigenvalues $\lambda_3$ and $\lambda_4$, we have
	\begin{lem}\label{lem-tezheng}
		Let $r_0$ be a small positive constant satisfying $r_0<\frac{1}{2\sqrt{\gamma}}<\frac{1}{2}$.
		\begin{itemize}
			\item When $|\xi|< r_0$, the eigenvalues $\lambda_3$ and $\lambda_4$ satisfy
			\begin{equation}\label{tezheng-l}
				\lambda_3=-\gamma|\xi|^2+O(|\xi|^4),
				\quad \lambda_4=-1+\gamma|\xi|^2+O(|\xi|^4).
			\end{equation}
			\item When $|\xi|\geq r_0$, it holds that 
			\begin{equation}\label{tezheng-mh}
				{\rm{Re}}\lambda_i\leq -r_0^2 \qquad \text{for $i=3, 4.$}
			\end{equation}
		\end{itemize}
	\end{lem}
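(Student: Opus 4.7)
The statement is a direct analysis of the two explicit eigenvalues
$$\lambda_{3,4}=\frac{-1\pm\sqrt{1-4|\xi|^2}}{2},$$
so the plan is a careful case-by-case expansion in $|\xi|$, with no deeper machinery involved.

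For the low-frequency asymptotic \eqref{tezheng-l} I would simply Taylor-expand the square root around $|\xi|=0$. Since $\sqrt{1-y}=1-\tfrac{1}{2}y-\tfrac{1}{8}y^2+O(y^3)$ for $|y|<1$, substituting $y=4|\xi|^2$ gives
$$\sqrt{1-4|\xi|^2}=1-2|\xi|^2-2|\xi|^4+O(|\xi|^6),$$
valid for $|\xi|<r_0$ with $r_0<\tfrac{1}{2}$. Inserting this into the formulas for $\lambda_3$ and $\lambda_4$ immediately yields $\lambda_3=-|\xi|^2+O(|\xi|^4)$ and $\lambda_4=-1+|\xi|^2+O(|\xi|^4)$, which is \eqref{tezheng-l}.

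For the middle/high-frequency bound \eqref{tezheng-mh} I would split the range $|\xi|\ge r_0$ into two subregions according to the sign of the discriminant $1-4|\xi|^2$. In the subregion $r_0\le|\xi|<\tfrac12$ the eigenvalues $\lambda_3,\lambda_4$ are real and negative, and the map $|\xi|\mapsto \tfrac{1-\sqrt{1-4|\xi|^2}}{2}$ is strictly increasing; therefore
$$\mathrm{Re}\,\lambda_3=\lambda_3=-\frac{1-\sqrt{1-4|\xi|^2}}{2}\le -\frac{1-\sqrt{1-4r_0^2}}{2},$$
and an elementary inequality ($1-\sqrt{1-s}\ge s/2$ for $s\in[0,1]$) applied with $s=4r_0^2$ gives a bound by $-r_0^2$; $\lambda_4$ is even more negative since it lies below $-\tfrac12$. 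In the subregion $|\xi|\ge \tfrac12$ the discriminant is non-positive, so $\lambda_3$ and $\lambda_4$ are complex conjugates with $\mathrm{Re}\,\lambda_3=\mathrm{Re}\,\lambda_4=-\tfrac12$, which is bounded above by $-r_0^2$ provided $r_0<\tfrac{1}{\sqrt2}$ (already guaranteed by $r_0<\tfrac12$). Combining the two subregions delivers \eqref{tezheng-mh}.

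The only mildly delicate point, and what I would flag as the main thing to be careful about, is the gluing at $|\xi|=\tfrac12$, where the eigenvalues transition from real to complex; continuity ensures the bound passes through this threshold, but one should verify the elementary inequality $1-\sqrt{1-s}\ge s/2$ (equivalent to $(1-s/2)^2\ge 1-s$) to make the low-frequency side of the estimate clean. Everything else is routine.
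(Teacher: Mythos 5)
Your proposal is correct and follows essentially the same approach as the paper: Taylor expansion for $|\xi|<r_0$, and a split of $|\xi|\ge r_0$ at the discriminant threshold $|\xi|=\tfrac12$, with complex conjugate roots of real part $-\tfrac12$ above and real roots below. The only cosmetic difference is that in the middle range $r_0\le|\xi|\le\tfrac12$ the paper rationalizes directly, writing $\lambda_3=\tfrac{-2|\xi|^2}{1+\sqrt{1-4|\xi|^2}}\le-|\xi|^2\le-r_0^2$, which gives the pointwise bound without invoking monotonicity or the auxiliary inequality $1-\sqrt{1-s}\ge s/2$ — these are algebraically equivalent to your argument but a touch shorter.
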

	\begin{proof}
		For $|\xi|< r_0$, \eqref{tezheng-l} is directly obtained by using the Taylor expansion formula.
		For $|\xi|> \frac{1}{2}$, $\lambda_3$ and $\lambda_4$ are complex roots and
		\begin{equation*}
			{\rm{Re}}\lambda_3={\rm{Re}}\lambda_4=-\frac{1}{2}.
		\end{equation*}
		For $r_0\leq|\xi|\leq \frac{1}{2}$, $\lambda_3$ and $\lambda_4$ are real roots and satisfy
		\begin{align*}
			\lambda_3&=-\frac{1-\sqrt{1-4\gamma|\xi|^2}}{2}
			=-\frac{2\gamma|\xi|^2}{1+\sqrt{1-4\gamma|\xi|^2}}
			\leq -\gamma|\xi|^2
			\leq -r_0^2,\\
			\lambda_4&=-\frac{1+\sqrt{1-4\gamma|\xi|^2}}{2}
			\leq -\frac{1}{2}.
		\end{align*}
		Thus, the proof is completed.
	\end{proof}
	
	Based on the above eigenvalue estimates, we have the following estimates of the Green function.
	\begin{lem}\label{gl-lam}
		For any smooth function $\zeta$, the following estimates hold for any integer $k\geq 0$,
        \begin{equation}\label{G}
\|D^k G_{ij}\ast \zeta\|_{L^2} \le 
\begin{cases}
C(1+t)^{-\frac{k}{2}}\big(\|\zeta\|_{L^2}
			+\|D^k\zeta\|_{L^2}\big) & \text{for \;$i+j=2$},\\[1mm]
C(1+t)^{-\frac{k+1}{2}}\big(\|\zeta\|_{L^2}
			+\|D^k\zeta\|_{L^2}\big)\quad & \text{for \;$i+j=3$},\\
C(1+t)^{-\frac{k+2}{2}}\big(\|\zeta\|_{L^2}
			+\|D^k\zeta\|_{L^2}\big)\quad & \text{for \;$i+j=4$},
\end{cases}
\end{equation}
     where $i, j\in \{1,2\}$ and the above $C$ depends on $\gamma$ but does not have a singularity as $\gamma \to 1$. The low-frequency part of $G_{ij}$ satisfies
		\begin{equation}\label{GL}
\|D^k G_{ij}^{\ell}\ast \zeta\|_{L^2} \le 
\begin{cases}
C(1+t)^{-\frac{k}{2}}\|\zeta\|_{L^2} & \text{for \;$i+j=2$},\\[1mm]
C(1+t)^{-\frac{k+1}{2}}\|\zeta\|_{L^2}\quad & \text{for \;$i+j=3$},\\
C(1+t)^{-\frac{k+2}{2}}\|\zeta\|_{L^2}\quad & \text{for \;$i+j=4$}.
\end{cases}
\end{equation}
	\end{lem}
	
	\begin{proof}
		A direct computation gives the  exact expression of the Fourier transform of $G$ as follows:
		\begin{align}\label{01001}
			\hat{G}(t,\xi)
			= \left( {\begin{array}{*{20}{c}}
					{\hat{G}_{11}}&{\hat{G}_{12}}\\
					{\hat{G}_{21}}&{\hat{G}_{22}}
			\end{array}} \right)
			=\left( {\begin{array}{*{20}{c}}
					{\frac{\lambda_3 e^{\lambda_4t}
							-\lambda_4e^{\lambda_3t}}{\lambda_3-\lambda_4}}
					&{-i\xi^{\top}\frac{e^{\lambda_3 t}
							-e^{\lambda_4 t}}{\lambda_3-\lambda_4}}\\
					{-i\xi\frac{e^{\lambda_3 t}
							-e^{\lambda_4t}}{\lambda_3-\lambda_4}}
					&{e^{-t}I
						+\big(\frac{\lambda_3 e^{\lambda_3 t}
							-\lambda_4e^{\lambda_4t}}{\lambda_3-\lambda_4}
						-e^{-t}\big)\frac{\xi\xi^{\top}}{|\xi|^2}}
			\end{array}} \right).
		\end{align}
		For any smooth function $\zeta$, we have
		\begin{align*}
			\| D^{k}G_{11}\ast \zeta\|^2_{L^2}
			&=C\|(i\xi)^{k}\hat{G}_{11}\hat{\zeta}\|^2_{L^2}\\
			&=C\Big(\int_{|\xi|<r_0}
			+\int_{|\xi|\geq r_0}\Big)|(i\xi)^{k}\hat{G}_{11}\hat{\zeta}|^2\mathrm{d}\xi\\
			&\leq C\int_{|\xi|<r_0} |(i\xi)^{k}e^{-|\xi|^2t}\hat{\zeta}|^2\mathrm{d}\xi
			+C\int_{|\xi|\geq r_0} |(i\xi)^{k}e^{-r^2_0t}\hat{\zeta}|^2\mathrm{d}\xi\\
			&\leq C\|\widehat{D^k H}\hat{\zeta}\|_{L^2}^2
			+C\|e^{-r^2_0t}\widehat{\delta(x)}\widehat{D^{k} \zeta}\|^2_{L^2}\\
			&\leq C\|D^k H\|_{L^1}^2\|\zeta\|_{L^2}^2
			+C e^{-2r^2_0t}\|D^k\zeta\|_{L^2}^2   \\
			&\leq C(1+t)^{-k}\big(\|\zeta\|_{L^2}^2+\|D^k\zeta\|_{L^2}^2\big),
		\end{align*}
		where $H(t,x)=(4\pi{t})^{-\frac{3}{2}}e^{-\frac{|x|^2}{4t}}$ is the heat kernel and $\delta(x)$ is a Dirac delta function. It should be emphasized that all constants $C$ appearing in the above estimates may depend on the adiabatic exponent $\gamma$, but are uniformly bounded with respect to $\gamma\in (1, 3]$.

		This uniformity follows from the fact that
		\[
		\lambda_3-\lambda_4=\sqrt{1-4\gamma|\xi|^2}
		\]
		remains strictly positive for all $|\xi|<r_0$, where $r_0>0$ is chosen, independent of $\gamma$.  Moreover, if $|\xi|\rightarrow \frac{1}{2\sqrt{\gamma}}$, then $\lambda_3\rightarrow \lambda_4$ so that 
		\begin{align*}
		&\lim_{\lambda_3\rightarrow \lambda_4}	{\frac{\lambda_3 e^{\lambda_4t}
					-\lambda_4e^{\lambda_3t}}{\lambda_3-\lambda_4}}=(1-\lambda_4t)e^{\lambda_4t}, \\
                   &\lim_{\lambda_3\rightarrow \lambda_4} \frac{e^{\lambda_3 t}
					-e^{\lambda_4t}}{\lambda_3-\lambda_4}=te^{\lambda_4t},\\
				&\lim_{\lambda_3\rightarrow \lambda_4}	\frac{\lambda_3 e^{\lambda_3 t}
						-\lambda_4e^{\lambda_4t}}{\lambda_3-\lambda_4}=(1+\lambda_4t)e^{\lambda_4t}.
		\end{align*}	
		Then it is deduced that, when $|\xi|\rightarrow \frac{1}{2\sqrt{\gamma}}$, the Green function has no singularity point, which implies that it is bounded.
	
		Thus we obtain
		\begin{align*}
			\| D^{k}G_{11}\ast \zeta\|_{L^2}\leq C(1+t)^{-\frac{k}{2}}\big(\|\zeta\|_{L^2}+\|D^k\zeta\|_{L^2}\big).
		\end{align*}
		Furthermore, $\eqref{G}_2$ can be estimated in a similar way as above, here we omit the details.
		Moreover, according to the definition of the low-frequency projection in  \eqref{p1pinf}, we have
		\begin{align*}
			\| D^k G_{11}^{\ell} \ast \zeta\|^2_{L^2}
			&=C\|(i\xi)^{k}\hat{G}_{11}^{\ell}\hat{\zeta}\|^2_{L^2}\\
			&=C\int_{|\xi|<r_0}|(i\xi)^{k}\hat{G}_{11}\hat{\zeta}|^2\mathrm{d}\xi\\
			&\leq C\int_{|\xi|<r_0} |(i\xi)^{k}e^{-|\xi|^2t}\hat{\zeta}|^2\mathrm{d}\xi\\
			&\leq C\|\widehat{D^k H}\hat{\zeta}\|_{L^2}^2\\
			&\leq C\|D^k H\|_{L^1}^2\|\zeta\|_{L^2}^2\\
			&\leq C(1+t)^{-k}\|\zeta\|_{L^2}^2,
		\end{align*}
		which yields $\eqref{GL}_{1}$.  The remaining inequalities in $\eqref{GL}_{2}$ can be proved similarly. This completes the proof.
	\end{proof}

	Before analyzing the decay rates of the solution, we need to establish the damping-type estimates of $(\phi,u)$.
	
	\begin{lem}\label{d12-lem1}
		Under assumption \eqref{priori}, the following estimates hold\,\rm{:}
		\begin{equation}\label{dadu-1}
			\|(D \phi,D u)\|_{H^2}^2\leq Ce^{-C\delta_0^{\frac{9}{11}}t}\|(D \phi_0,D u_0)\|_{H^2}^2
		+\tilde{C}\delta_0^{\frac{9}{11}}\int_0^te^{-C\delta_0^{\frac{9}{11}}(t-\tau)} \|D \phi^{\ell}(\tau)\|_{L^2}^2\mathrm{d}\tau,
		\end{equation}
		\begin{equation}\label{dadu-2}
			\|(D^2 \phi,D^2 u)\|_{H^1}^2\leq Ce^{-C\delta_0^{\frac{9}{11}}t}	\|(D^2 \phi_0,D^2 u_0)\|_{H^1}^2
		+\tilde{C}\delta_0^{\frac{9}{11}}\int_0^te^{-C\delta_0^{\frac{9}{11}}(t-\tau)} \|D^2 \phi^{\ell}(\tau)\|_{L^2}^2\mathrm{d}\tau.
		\end{equation}
	\end{lem}
	
	\begin{proof}
		Taking the $L^2$-inner product of \eqref{au1}$_2$ with $\nabla \phi$, we have
		\begin{align*}
			&\sqrt{\gamma}\|\nabla \phi\|_{L^2}^2
			+ \frac{\mathrm{d}}{\mathrm{d}t} \int_{\mathbb{R}^3} u\cdot\nabla \phi \, \mathrm{d}x\\
			&= \int_{\mathbb{R}^3} u\cdot\nabla \partial_t\phi\, \mathrm{d}x
			- \int_{\mathbb{R}^3} \nabla \phi \cdot (u\cdot \nabla u) \, \mathrm{d}x
			-\int_{\mathbb{R}^3} \nabla \phi \cdot u \, \mathrm{d}x-\frac{\gamma-1}{2}\int_{\mathbb{R}^3}\phi\nabla\phi\cdot\nabla\phi \, \mathrm{d}x
			\\
			&= \frac{\gamma-1}{2}\int_{\mathbb{R}^3}\phi (\mathrm{div}u)^2  \, \mathrm{d}x+\sqrt{\gamma}\int_{\mathbb{R}^3} (\mathrm{div}u)^2 \, \mathrm{d}x+\int_{\mathbb{R}^3} \mathrm{div}\, u (u\cdot\nabla \phi)
			\, \mathrm{d}x \\
			&\quad\,- \int_{\mathbb{R}^3} \nabla \phi \cdot (u\cdot \nabla u) \, \mathrm{d}x
			-\int_{\mathbb{R}^3} \nabla \phi \cdot u \, \mathrm{d}x-\frac{\gamma-1}{2}\int_{\mathbb{R}^3}\phi\nabla\phi\cdot\nabla\phi \, \mathrm{d}x \\
			&\leq  \frac{\gamma-1}{2} \|\phi \|_{L^{\infty}}\big(\|\nabla u\|_{L^2}^2+\|\nabla \phi\|_{L^2}^2\big)
			+\sqrt{\gamma} \|\nabla u\|_{L^2}^2 \\
			&\quad\,+\| \nabla u \|_{L^{\infty}}(\|u\|_{L^2}^2
            +\|\nabla \phi\|_{L^2}^2)+ \frac{1}{4}\|\nabla \phi\|_{L^2}^2
			+\| u\|_{L^2}^2\\
			&\leq  \big(
			\frac{\gamma-1}{2}\|\phi\|_{L^\infty}
			+ \|\nabla u\|_{L^\infty}
			+ \frac14
			\big)\|\nabla \phi\|_{L^2}^2  \\
			&\quad\, 
            +\big(\sqrt{\gamma}+\frac{\gamma-1}{2}\|\phi\|_{L^\infty}
			\big)\|\nabla u\|_{L^2}^2 + \big(1+\|\nabla u\|_{L^\infty}\big)\|u\|_{L^2}^2.
		\end{align*}
		Applying the estimates that 
        $\| \phi \|_{L^{\infty}}\leq C\|D \phi\|_{L^2}^{\frac{1}{2}}\|D^2 \phi\|_{L^2}^{\frac{1}{2}}\leq \tilde{C}\delta_0^{\frac{5}{11}}$
        and $\|\nabla u\|_{L^\infty}\leq \tilde{C}\delta_0^{\frac{9}{11}}$
        and using \eqref{010101}--\eqref{010102}, together with the smallness of $\delta_0$ and $\gamma>1$, we have
		\begin{equation*}
			\frac{1}{2}\|\nabla \phi\|_{L^2}^2
			+\frac{\mathrm{d}}{\mathrm{d}t} \int_{\mathbb{R}^3}u\cdot\nabla \phi \, \mathrm{d}x
			\leq \tilde{C}_3\|u\|_{H^1}^2.
		\end{equation*}
		It should be mentioned that constant $\tilde{C}_3$ here depends on $\gamma$ but is uniformly bounded in  
        $\gamma \in (1,3]$.
		Applying $D^{k}$, $k=1,2$, to \eqref{au1}$_2$, taking the $L^2$-inner product of the resulting equation and $\nabla D^{k} \phi$, and following the above process, we deduce
		\begin{align}
			&\frac{1}{2}\|\nabla D \phi\|_{L^2}^2
			+\frac{\mathrm{d}}{\mathrm{d}t} \int_{\mathbb{R}^3}
			D u\cdot\nabla D \phi \, \mathrm{d}x\leq \tilde{C}_4\|D u\|_{H^1}^2,\label{nabla2a1}\\
		&\frac{1}{2}\|\nabla D^2 \phi\|_{L^2}^2+
			\frac{\mathrm{d}}{\mathrm{d}t} \int_{\mathbb{R}^3}
			D^2 u\cdot\nabla D^2 \phi \, \mathrm{d}x\leq \tilde{C}_5\|D^2 u\|_{H^1}^2,\label{nabla3a1}
		\end{align}
		where $\tilde{C}_3$, $\tilde{C}_4$, and $\tilde{C}_5$ are positive constants independent of time. They may depend on $\gamma$, but remain uniformly bounded in $\gamma\in(1,3]$. 
        Moreover, summing \eqref{aualpha1} with respect to $k$ from 1 to 3, 
        we can also obtain
		\begin{align}\label{nabla3u1}
			\frac{1}{2}\frac{\mathrm{d}}{\mathrm{d}t} \|(D \phi,D u)\|_{H^2}^2
			+\|D u\|_{H^2}^2
			\leq \tilde{C} \big(\|\nabla \phi\|_{L^{\infty}}
			+\|\nabla u \|_{L^{\infty}}\big)
			\|(D \phi,D u)\|_{H^2}^2.
		\end{align}
		Multiplying \eqref{nabla2a1} and \eqref{nabla3a1} by $\beta_1>0$
        and adding the resulting equations to \eqref{nabla3u1} yield
		\begin{align}
			&\frac{\mathrm{d}}{\mathrm{d}t} \Big(
			\frac{1}{2}\|(D \phi,D u)\|_{H^2}^2
			+\beta_1 \int_{\mathbb{R}^3}D u\cdot\nabla D \phi \, \mathrm{d}x
			+ \beta_1\int_{\mathbb{R}^3}D^2 u\cdot\nabla D^2 \phi \, \mathrm{d}x\Big)
			+ \frac{1}{2} \beta_1\|\nabla D \phi\|_{H^1}^2
			+ \|D u\|_{H^2}^2 \notag\\
			&\leq \tilde{C} \big(\|\nabla \phi\|_{L^{\infty}}
			+\|\nabla u \|_{L^{\infty}}\big)
			\|(D \phi,D u)\|_{H^2}^2
			+\beta_1(\tilde{C}_4+\tilde{C}_5)\|D u\|_{H^2}^2\notag\\
			&\leq \tilde{C} (\|D^2 \phi\|_{L^2}^{\frac{1}{2}}
			\|D^3 \phi\|_{L^2}^{\frac{1}{2}}
			+\|D^2 u\|_{L^2}^{\frac{1}{2}}
			\|D^3 u\|_{L^2}^{\frac{1}{2}})
			\|(D \phi,D u)\|_{H^2}^2+\beta_1(\tilde{C}_4+\tilde{C}_5)\|D u\|_{H^2}^2\notag\\
			&\leq \tilde{C} \delta_0^{\frac{9}{11}}\|(D \phi,D u)\|_{H^2}^2
			+\beta_1(\tilde{C}_4+\tilde{C}_5)\|D u\|_{H^2}^2,\label{diff-a1}
		\end{align}
		where we have used \eqref{d3rho} in the last inequality.
        
	Let 
	\begin{align}
	A_1(t)=	\frac{1}{2}\|(D \phi,D u)\|_{H^2}^2
	+\beta_1 \int_{\mathbb{R}^3}D u\cdot\nabla D \phi \, \mathrm{d}x
	+ \beta_1\int_{\mathbb{R}^3}D^2 u\cdot\nabla D^2 \phi \, \mathrm{d}x.
	\end{align}	
	Choosing $\beta_1$ suitably small so that 
        \begin{align}\label{AD1}
	A_1(t)\sim 	\|(D \phi,D u)\|_{H^2}^2
	\end{align}	 and 
     $\beta_1(\tilde{C}_4+\tilde{C}_5)
		\leq\frac{1}{2}$,  then we obtain from \eqref{aualpha1} that
		\begin{equation*}
			\frac{\mathrm{d}}{\mathrm{d}t}
		A_1(t)
			+\frac{1}{2}\beta_1 \|\nabla D \phi\|_{H^1}^2
			+ \frac{1}{2}\|D u\|_{H^2}^2\\
			\leq \tilde{C} \delta_0^{\frac{9}{11}}\|(D \phi,D u)\|_{H^2}^2.
		\end{equation*}
		Adding $\delta_0^{\frac{9}{11}}\|D \phi\|_{L^2}^2$ to both sides of the above inequality and noting \eqref{gaodi} yields
		\begin{align*}
			&\frac{\mathrm{d}}{\mathrm{d}t}
			A_1(t)
			+\frac{1}{2}\beta_1 \|\nabla D \phi\|_{H^1}^2
			+ \frac{1}{2}\|D u\|_{H^2}^2
			+\delta_0^{\frac{9}{11}}\|D \phi\|_{L^2}^2\\
			&\leq \tilde{C} \delta_0^{\frac{9}{11}}
			\big(\|D \phi^{\ell}\|_{L^2}^2+\|D \phi^{h} \|_{L^2}^2
			+\|D u\|_{H^2}^2\big)\\
			& \leq \tilde{C} \delta_0^{\frac{9}{11}} \big(\|D \phi^{\ell} \|_{L^2}^2
			+ \|D^{2} \phi \|_{L^2}^2 +\|D u\|_{H^2}^2\big).
		\end{align*}
		Choosing $\delta_0$ is sufficiently small so that $\tilde{C}\delta_0^{\frac{9}{11}}
		< \frac{1}{4}\mathrm{min}\{1, \beta_1\}$, we obtain 
		\begin{align}
			\frac{\mathrm{d}}{\mathrm{d}t}A_1(t)
			+C\delta_0^{\frac{9}{11}}\|(D \phi,D u)\|_{H^2}^2
			\leq \tilde{C}\delta_0^{\frac{9}{11}}\|D \phi^{\ell}\|_{L^2}^2.
		\end{align}	
	With the help of \eqref{AD1}, we solve the above differential inequality to obtain 
	\begin{align}
		\|(D \phi,D u)\|_{H^2}^2\leq Ce^{-C\delta_0^{\frac{9}{11}}t}\|(D \phi_0,D u_0)\|_{H^2}^2
		+\tilde{C}\delta_0^{\frac{9}{11}}\int_0^te^{-C\delta_0^{\frac{9}{11}}(t-\tau)} \|D \phi^{\ell}(\tau)\|_{L^2}^2\mathrm{d}\tau.
	\end{align}	 
	
		Furthermore, \eqref{dadu-2} can be derived in a similar manner. Here, we omit the details for the sake of simplicity.
	\end{proof}	
	
	The damping-type estimate of the third-order derivative of $(\phi,u)$ is obtained in the following lemma.
	It should be pointed out that, since \eqref{au1} is not of 
    dissipative structure, which means there is no higher regularity of $(\phi,u)$,
	the estimate in the following lemma is based on the high-frequency part of \eqref{au1}:
	\begin{equation}\label{high-f}
		\left\{
		\begin{aligned}
			& \partial_t \phi^h + \sqrt{\gamma}\,\mathrm{div}u^h= f^h_1,\\
			& \partial_t u^h+\sqrt{\gamma}\,\nabla \phi^h+u^h =f^h_2,
		\end{aligned}
		\right.
	\end{equation}
	where $f_1=-u \cdot \nabla \phi-\frac{\gamma-1}{2} \phi \, \mathrm{div} u$ and 
    $f_2=-u \cdot \nabla u-\frac{\gamma-1}{2} \phi\nabla \phi.$
	\begin{lem}\label{d3-lem1}
		Under assumption \eqref{priori}, 
		\begin{align}\label{dadu-3}
			\|(D^3 \phi, D^3 u)\|_{L^2}^2\leq C e^{-C\delta_0^{\frac{9}{11}}t}\|(D^3\phi_0, D^3u_0)\|_{L^2}^2+\tilde{C}\delta_0^{\frac{9}{11}}\int_0^t e^{-C\delta_0^{\frac{9}{11}}(t-\tau)}\|D^3\phi^\ell(\tau)\|_{L^2}^2\mathrm{d}\tau.	
		\end{align}	
	\end{lem}	
	\begin{proof}
		Applying $D^{2}$  to \eqref{high-f}$_2$ and then taking the $L^2$-inner product of the resulting equation with $\nabla D^{2} \phi^{h}$, we deduce
        \begin{align}
			&\sqrt{\gamma}\|\nabla D^2 \phi^h\|_{L^2}^2
			+\frac{\mathrm{d}}{\mathrm{d}t} \int_{\mathbb{R}^3} D^2 u^h\cdot \nabla D^2 \phi^h \, \mathrm{d}x \nonumber\\
			&=\int_{\mathbb{R}^3} D^2 u^h\cdot \nabla D^2 \partial_t\phi^h \, \mathrm{d}x
			-\int_{\mathbb{R}^3} D^2 u^h\cdot \nabla D^2 \phi^h \, \mathrm{d}x
			+\int_{\mathbb{R}^3} D^2 f_2^h\cdot \nabla D^2 \phi^h \, \mathrm{d}x \nonumber\\
			&=\int_{\mathbb{R}^3} D^2 u^h\cdot \nabla D^2(-\sqrt{\gamma}{\mathrm{div}}u^h+ f^h_1)\, \mathrm{d}x
			-\int_{\mathbb{R}^3} D^2 u^h\cdot \nabla D^2 \phi^h \, \mathrm{d}x
			+\int_{\mathbb{R}^3} D^2 f_2^h\cdot \nabla D^2 \phi^h \, \mathrm{d}x\nonumber\\
			&=\sqrt{\gamma}\int_{\mathbb{R}^3} |D^2 {\mathrm{div}}u^h|^2\, \mathrm{d}x
			-\int_{\mathbb{R}^3} D^2 u^h\cdot \nabla D^2 \phi^h \, \mathrm{d}x +\int_{\mathbb{R}^3} D^2 u^h \cdot \nabla D^2(-u\cdot \nabla \phi)^h\, \mathrm{d}x\nonumber\\
			&\quad\quad+\int_{\mathbb{R}^3} \nabla D^2 \phi^h \cdot D^2 (-u \cdot \nabla u)^h   {\mathrm{d}}x
			-\frac{\gamma-1}{2}\int_{\mathbb{R}^3} D^2 u^h\cdot \nabla D^2 (\phi {\mathrm{div}}u)^h\, \mathrm{d}x \nonumber\\
            &\quad\quad-\frac{\gamma-1}{2}\int_{\mathbb{R}^3} \nabla D^2 \phi^h\cdot  D^2 (\phi \nabla \phi)^h\, \mathrm{d}x \nonumber\\
			&\leq \sqrt{\gamma}\|D^2 \mathrm{div}\, u^h\|_{L^2}^2
			+\epsilon \|\nabla D^2 \phi^h\|_{L^2}^2
			+C_{\epsilon} \|D^2 u^h\|_{L^2}^2
			+K_1+K_2+K_3+K_4.\label{d3ah}
		\end{align}    
For the $K_1$--term on the right-hand side of \eqref{d3ah}, we have
        \begin{align}
			K_1 &=\int_{\mathbb{R}^3}D^2 u^h \cdot \nabla D^2(-u\cdot \nabla \phi)^h\, \mathrm{d}x \nonumber\\
			& =\int_{\mathbb{R}^3}D^2 u^h \cdot \nabla D^2(-u\cdot \nabla \phi)\, \mathrm{d}x
			+\int_{\mathbb{R}^3}D^2 u^h \cdot \nabla D^2(u\cdot \nabla \phi)^{\ell}\, \mathrm{d}x \nonumber\\
			& =\int_{\mathbb{R}^3} D^2 u^h\cdot [\nabla D^2,-u\cdot\nabla]\phi \, \mathrm{d}x
			-\int_{\mathbb{R}^3} D^2 u^h\cdot (u \cdot\nabla\nabla D^2 \phi)\, \mathrm{d}x \nonumber\\
			&\quad +\int_{\mathbb{R}^3} D^2 u^h\cdot \nabla D^2(u\cdot \nabla \phi)^{\ell}\, \mathrm{d}x\nonumber\\
			&:=J_1+J_2+J_3. \label{uhf1h}
		\end{align}
For the $J_1$--term, it follows from the fact that $\|D^2 u^h\|_{L^2}
		\leq C\|D^3 u^h\|_{L^2} \leq C\|D^3 u\|_{L^2}$
		and Lemma \ref{jiaohuan}  that
		\begin{align*}
			J_1
			&\leq C\|D^3 u\|_{L^2}\big(
			\|\nabla u\|_{L^{\infty}}\|D^3 \phi\|_{L^2}
			+\|D^3 u\|_{L^2}\|\nabla \phi\|_{L^{\infty}}
			\big)\\
			&\leq C\big(\|\nabla \phi\|_{L^{\infty}}
			+\|\nabla u\|_{L^{\infty}}\big)
			\big(\|D^3 \phi\|_{L^2}^2
			+\|D^3 u\|_{L^2}^2\big).
		\end{align*}
		For the $J_2$--term, integration by parts yields
		\begin{align*}
			J_2
			&=\int_{\mathbb{R}^3} (\nabla D^2 u^h\cdot u)\cdot\nabla D^2 \phi \, \mathrm{d}x
			+\int_{\mathbb{R}^3} D^2 u^h\cdot{\mathrm{div}}u\cdot\nabla D^2 \phi \, \mathrm{d}x\\
			&\leq \|\nabla D^2 u^h\|_{L^2}\|u\|_{L^{\infty}}
			\|\nabla D^2 \phi\|_{L^2}
			+C\|\nabla u\|_{L^{\infty}}
			\|D^2 u^h\|_{L^2}
			\|\nabla D^2 \phi\|_{L^2}\\
			&\leq C\|\nabla D^2 u\|_{L^2}\|\nabla D^2 \phi\|_{L^2}
			+C\|\nabla u\|_{L^{\infty}}\big(
			\|D^3 u\|_{L^2}^2
			+\|\nabla D^2 \phi\|_{L^2}^2\big).
		\end{align*}
		From \eqref{gaodi}, it follows that $\|\nabla D^2 g^{\ell}\|_{L^2}\leq C\|\nabla D g^{\ell}\|_{L^2}
		\leq C\|\nabla D g\|_{L^2}$.
		Then, for the $J_3$--term, we obtain from Lemma \ref{jiaohuan} that
		\begin{align*}
			J_3 &\leq C \|D^3 u\|_{L^2}\|\nabla D (u\cdot\nabla \phi)\|_{L^2}\\
			&\leq C \|D^3 u\|_{L^2}\big(
			\|\nabla D u\cdot\nabla \phi\|_{L^2}
			+ \|\nabla u\cdot\nabla D \phi\|_{L^2}
			+\| u\cdot\nabla^2 D \phi\|_{L^2}\big)\\
			&\leq C \|D^3 u\|_{L^2}\big(
			\|\nabla D u\|_{L^6}\|\nabla \phi\|_{L^3}
			+ \|\nabla u\|_{L^3}\|\nabla D \phi\|_{L^6}
			+\| u\|_{L^{\infty}}\|\nabla^2 D \phi\|_{L^2}\big)\\
			&\leq C \|D^3 u\|_{L^2}\big(\|D^3 u\|_{L^2}\|\nabla \phi\|_{H^1}
			+\|\nabla u\|_{H^1}\|D^3 \phi\|_{L^2}
			+\|D^3 \phi\|_{L^2}\big)\\
			&\leq C \|D^3 \phi\|_{L^2}\|D^3 u\|_{L^2}
			+C\|D^3 u\|_{L^2}^2.
		\end{align*}
		Putting $J_i, i=1,2,3,$ into \eqref{uhf1h}, we deduce
		\begin{align}
			K_1\leq{}& C\big(\|\nabla \phi\|_{L^{\infty}}
			+\|\nabla u\|_{L^{\infty}}\big)
			\big(\|D^3 \phi\|_{L^2}^2
			+\|D^3 u\|_{L^2}^2\big) \notag\\
			&+C\|D^3 \phi\|_{L^2}\|D^3 u\|_{L^2}
			+C\|D^3 u\|_{L^2}^2.  \label{4xiang}
		\end{align}
For the $K_2$--term, similarly to the estimate of $K_1$, we decompose
\begin{align}
	K_2
	&=\int_{\mathbb R^3}\nabla D^2\phi^h\cdot D^2(-u\cdot\nabla u)^h\,\mathrm{d}x \notag\\
	&=\int_{\mathbb R^3}\nabla D^2\phi^h\cdot D^2(-u\cdot\nabla u)\,\mathrm{d}x
	-\int_{\mathbb R^3}\nabla D^2\phi^h\cdot D^2(-u\cdot\nabla u)^\ell\,\mathrm{d}x \notag\\
	&=\int_{\mathbb R^3}\nabla D^2\phi^h\cdot [D^2,-u\cdot\nabla]u\,\mathrm{d}x
	-\int_{\mathbb R^3}\nabla D^2\phi^h\cdot (u\cdot\nabla D^2u)\,\mathrm{d}x  \notag\\
	&\quad
	-\int_{\mathbb R^3}\nabla D^2\phi^h\cdot D^2(-u\cdot\nabla u)^\ell\,\mathrm{d}x \notag\\
	&:=J_4+J_5+J_6 .
	\label{K2decomp}
\end{align}
By Lemma \ref{jiaohuan}, the Sobolev inequality, and the fact that
$\|\nabla D^2\phi^h\|_{L^2}\leq C\|D^3\phi\|_{L^2}$, we have
\begin{align*}
	J_4
	&\leq C\|\nabla D^2\phi^h\|_{L^2}
	\big(
	\|\nabla u\|_{L^3}\|D^2u\|_{L^6}
	+\|D^2u\|_{L^6}\|\nabla u\|_{L^3}
	\big) \\
    &\leq C\| D^3\phi^h\|_{L^2}
	\|D u\|_{H^1}\|D^3u\|_{L^2}\\
	&\leq 
	C\|D^3\phi\|_{L^2}\|D^3u\|_{L^2}.
\end{align*}
For the $J_5$--term, we directly obtain
\begin{align*}
	J_5
	\leq \|\nabla D^2\phi^h\|_{L^2}\|u\|_{L^\infty}
	\|\nabla D^2u\|_{L^2}
	\leq C\|D^3\phi\|_{L^2}\|D^3u\|_{L^2}.
\end{align*}
Moreover, using \eqref{gaodi}, we have
\[
\|D^2(u\cdot\nabla u)^\ell\|_{L^2}
\leq C\|\nabla D(u\cdot\nabla u)\|_{L^2}.
\]
Then
\begin{align*}
	J_6
	&\leq C\|\nabla D^2\phi^h\|_{L^2}
	\|\nabla D(u\cdot\nabla u)\|_{L^2} \\
	&\leq C\|D^3\phi\|_{L^2}
	\big(
	\|\nabla D u\|_{L^6}\|\nabla u\|_{L^3}
	+\|\nabla u\|_{L^3}\|D^2u\|_{L^6}
	+\|u\|_{L^\infty}\|\nabla D^2u\|_{L^2}
	\big) \\
	&\leq C\|D^3\phi\|_{L^2}\|D^3u\|_{L^2}.
\end{align*}
Consequently,
\begin{align}
	K_2
	\leq
	C\|D^3\phi\|_{L^2}\|D^3u\|_{L^2}.
	\label{K2estimate}
\end{align}

We next estimate the $K_3$--term. We write
\begin{align}
	K_3
	&=-\frac{\gamma-1}{2}
	\int_{\mathbb R^3}D^2u^h\cdot\nabla D^2(\phi\,\mathrm{div}u)^h\,\,\mathrm{d}x \notag\\
	&=-\frac{\gamma-1}{2}
	\int_{\mathbb R^3}D^2u^h\cdot\nabla D^2(\phi\,\mathrm{div}u)\,\,\mathrm{d}x \notag\\
	&\quad
	+\frac{\gamma-1}{2}
	\int_{\mathbb R^3}D^2u^h\cdot\nabla D^2(\phi\,\mathrm{div}u)^\ell\,\mathrm{d}x \notag\\
	&=-\frac{\gamma-1}{2}
	\int_{\mathbb R^3}D^2u^h\cdot
	[\nabla D^2,\phi]\,\mathrm{div}u\,\mathrm{d}x \notag\\
	&\quad
	-\frac{\gamma-1}{2}
	\int_{\mathbb R^3}D^2u^h\cdot
	\phi\,\nabla D^2\mathrm{div}u\,\mathrm{d}x \notag\\
	&\quad
	+\frac{\gamma-1}{2}
	\int_{\mathbb R^3}D^2u^h\cdot\nabla D^2(\phi\,\mathrm{div}u)^\ell\,\mathrm{d}x \notag\\
	&:=J_7+J_8+J_9 .
	\label{K3decomp}
\end{align}
For the $J_7-$term, Lemma \ref{jiaohuan} gives
\begin{align*}
	J_7
	&\leq \tilde{C}\|D^2u^h\|_{L^2}
	\big(
	\|\nabla\phi\|_{L^\infty}\|D^2\mathrm{div}u\|_{L^2}
	+\|D^3\phi\|_{L^2}\|\mathrm{div}u\|_{L^\infty}
	\big) \\
	&\leq  \tilde{C}\big(\|\nabla\phi\|_{L^\infty}
	+\|\nabla u\|_{L^\infty}\big)
	\big(\|D^3\phi\|_{L^2}^2+\|D^3u\|_{L^2}^2\big).
\end{align*}
For the $J_8-$term, integration by parts yields
\begin{align*}
	J_8
	&=\frac{\gamma-1}{2}
	\int_{\mathbb R^3}\mathrm{div}\big(\phi D^2u^h\big)
	D^2\mathrm{div}u\,\mathrm{d}x \\
	&=\frac{\gamma-1}{2}
	\int_{\mathbb R^3}\phi\,\mathrm{div}D^2u^h\,
	D^2\mathrm{div}u\,\mathrm{d}x
	+\frac{\gamma-1}{2}
	\int_{\mathbb R^3}\nabla\phi\cdot D^2u^h\,
	D^2\mathrm{div}u\,\mathrm{d}x,
\end{align*}
so that
\begin{align*}
	J_8
	&\leq  \tilde{C}\|\phi\|_{L^\infty}\|\nabla D^2u^h\|_{L^2}
	\|D^2\mathrm{div}u\|_{L^2}
	+\tilde{C}\|\nabla\phi\|_{L^\infty}\|D^2u^h\|_{L^2}
	\|D^2\mathrm{div}u\|_{L^2} \\
	&\leq  \tilde{C}\big(\|\phi\|_{L^\infty}+\|\nabla\phi\|_{L^\infty}\big)
	\|D^3u\|_{L^2}^2 .
\end{align*}
Finally, by \eqref{gaodi}, we have
\[
\|\nabla D^2(\phi\,\mathrm{div}u)^\ell\|_{L^2}
\leq C\|\nabla D(\phi\,\mathrm{div}u)\|_{L^2}.
\]
Therefore, we can estimate $J_9-$term as
\begin{align*}
	J_9
	&\leq  \tilde{C}\|D^2u^h\|_{L^2}
	\|\nabla D(\phi\,\mathrm{div}u)\|_{L^2} \\
	&\leq  \tilde{C}\|D^3u\|_{L^2}
	\big(
	\|\nabla D\phi\|_{L^6}\|\mathrm{div}u\|_{L^3}
	+\|\nabla\phi\|_{L^3}\|D\,\mathrm{div}u\|_{L^6}
	+\|\phi\|_{L^\infty}\|\nabla D\,\mathrm{div}u\|_{L^2}
	\big) \\
	&\leq  \tilde{C}\|D^3u\|_{L^2}
	\big(
	\|D^3\phi\|_{L^2}\|\nabla u\|_{H^1}
	+\|\nabla\phi\|_{H^1}\|D^3u\|_{L^2}
	+\|\phi\|_{L^\infty}\|D^3u\|_{L^2}
	\big) \\
	&\leq \tilde{C}\|D^3\phi\|_{L^2}\|D^3u\|_{L^2}
	+ \tilde{C}\big(\|\nabla\phi\|_{H^1}+\|\phi\|_{L^\infty}\big)
	\|D^3u\|_{L^2}^2 .
\end{align*}
Combining the estimates of $J_7$, $J_8$, and $J_9$, we obtain
\begin{align}
	K_3
	&\leq
	C\big(\|\phi\|_{L^\infty}
	+\|\nabla\phi\|_{L^\infty}
	+\|\nabla u\|_{L^\infty}
	+\|\nabla\phi\|_{H^1}\big)
	\big(\|D^3\phi\|_{L^2}^2+\|D^3u\|_{L^2}^2\big) \notag\\
	&\quad\,+C\|D^3\phi\|_{L^2}\|D^3u\|_{L^2}.
	\label{K3estimate}
\end{align}

For the $K_4-$term on the right-hand side of \eqref{d3ah}, we have
		\begin{equation}\label{estimateK4}
			\begin{aligned}
				K_4&=-\frac{\gamma-1}{2}\int_{\mathbb{R}^3} \nabla D^2 \phi^h\cdot  D^2 (\phi \nabla \phi)^h\, \mathrm{d}x \\
				&\leq \frac{C(\gamma-1)}{2} \big( \|\phi\|_{L^{\infty}}\|D^3 \phi\|_{L^2}^2+\|\nabla \phi\|_{L^{\infty}}\|D^3 \phi\|_{L^2}^2 \big)\\
                &\leq \tilde{C} \big( \|\phi\|_{L^{\infty}}\|D^3 \phi\|_{L^2}^2+\|\nabla \phi\|_{L^{\infty}}\|D^3 \phi\|_{L^2}^2 \big).
			\end{aligned}
		\end{equation}

		Putting \eqref{4xiang}, \eqref{K2estimate}, and \eqref{K3estimate}--\eqref{estimateK4} 
        into \eqref{d3ah} and noticing
		\begin{align*}
		\|(\nabla \phi,\nabla u)\|_{L^{\infty}}
		\leq C\|(D^2 \phi,D^2 u)\|^{\frac{1}{2}}_{L^2}
		\|(D^3 \phi,D^3 u)\|^{\frac{1}{2}}_{L^2}
		\leq \tilde{C} \delta_0^{\frac{9}{11}},
		\end{align*}
		we obtain
		\begin{align*}
			&\|D^3 \phi^h\|_{L^2}^2
			+\frac{\mathrm{d}}{\mathrm{d}t}\int_{\mathbb{R}^3} D^2 u^h\cdot \nabla D^2 \phi^h\, \mathrm{d}x \\
			&\leq \epsilon\|D^3 \phi\|_{L^2}^2
			+C_{\epsilon}\|D^3 u\|_{L^2}^2
			+\tilde{C}\big(\|\nabla \phi\|_{L^{\infty}}
			+\|\nabla u\|_{L^{\infty}}\big)
			\|(D^3 \phi,D^3 u)\|_{L^2}^2\\
			&\leq (\epsilon+\tilde{C}\delta_0^{\frac{9}{11}})\big(\|D^3 \phi^{h}\|_{L^2}^2
			+\|D^3 \phi^{\ell}\|_{L^2}^2\big)
			+(C_{\epsilon}+\tilde{C}\delta_0^{\frac{9}{11}})\|D^3 u\|_{L^2}^2.
		\end{align*}
		Choosing $\epsilon$ sufficiently small such that $\epsilon+\tilde{C}\delta_0^{\frac{9}{11}}\leq\frac{1}{2}$, then there exists a positive constant $\tilde{C}_6$ such that
		\begin{align}\label{uhah}
			\frac{1}{2}\|D^3 \phi^h\|_{L^2}^2+
			\frac{\mathrm{d}}{\mathrm{d}t}
			\int_{\mathbb{R}^3} D^2 u^h\cdot \nabla D^2 \phi^h\, \mathrm{d}x
			\leq \frac{1}{2}\|D^3 \phi^{\ell}\|_{L^2}^2+\tilde{C}_6\|D^3 u\|_{L^2}^2.
		\end{align}
		
		On the other hand, we deduce from \eqref{aualpha1} that
		\begin{align*}
			\frac{1}{2}\frac{\mathrm{d}}{\mathrm{d}t}
			\|(D^3 \phi,D^3 u)\|_{L^2}^2
			+ \|D^3 u\|_{L^2}^2
			&{}\leq \tilde{C} \big(\|\nabla \phi\|_{L^{\infty}}
			+\|\nabla u\|_{L^{\infty}}\big)\|(D^3 \phi,D^3 u)\|_{L^2}^2\\
			&{}\leq \tilde{C}\|(\nabla D  \phi,\nabla D u)\|^{\frac{1}{2}}_{L^2}
			\|(\nabla D^2 \phi,\nabla D^2 u)\|^{\frac{1}{2}}_{L^2}
			\|(D^3 \phi,D^3 u)\|_{L^2}^2\\
			&{}\leq \tilde{C}\delta_0^{\frac{9}{11}}\|(D^3 \phi,D^3 u)\|_{L^2}^2,
		\end{align*}
		and further obtain from the smallness of $\delta_0$ that
		\begin{align}\label{a3u3}
			\frac{\mathrm{d}}{\mathrm{d}t}
			\|(D^3 \phi, D^3 u)\|_{L^2}^2
			+\|D^3 u\|_{L^2}^2
			\leq \tilde{C}\delta_0^{\frac{9}{11}}\|D^3 \phi\|_{L^2}^2.
		\end{align}
		Multiplying \eqref{uhah} by $\beta_2>0$ and adding the resulting equations to \eqref{a3u3}, we deduce
		\begin{align*}
			&\frac{\mathrm{d}}{\mathrm{d}t}\Big(
			\|(D^3 \phi, D^3 u)\|_{L^2}^2
			+\beta_2 \int_{\mathbb{R}^3} D^2 u^h\cdot \nabla D^2 \phi^h\, \mathrm{d}x \Big)
			+ \frac{1}{2}\beta_2\|D^3 \phi^h\|_{L^2}^2
			+\|D^3 u\|_{L^2}^2\\
			&\leq \big(\frac{1}{2}\beta_2+\tilde{C}\delta_0^{\frac{9}{11}}\big)\|D^3 \phi^{\ell}\|_{L^2}^2
			+ \tilde{C}_6\beta_2\|D^3 u\|_{L^2}^2+\tilde{C}\delta_0^{\frac{9}{11}}\|D^3 \phi^{h}\|_{L^2}^2.
		\end{align*}
		Choosing $\beta_2$ suitably small such that
		\begin{align*}
		\tilde{C}_6\beta_2\leq \frac{1}{2} ,\quad \quad \tilde{C}\delta_0^{\frac{9}{11}}\leq \frac{1}{4}\beta_2.
		\end{align*}
Then		
		\begin{align}\label{G3U}
		\frac{\mathrm{d}}{\mathrm{d}t}\Big(
		\|(D^3 \phi, D^3 u)\|_{L^2}^2
		+\beta_2 \int_{\mathbb{R}^3} D^2 u^h\cdot \nabla D^2 \phi^h\, \mathrm{d}x \Big)
		+C\delta_0^{\frac{9}{11}}\|(D^3\phi, D^3u)\|_{L^2}^2\leq \tilde{C}\delta_0^{\frac{9}{11}}\|D^3\phi^\ell\|_{L^2}^2.
	\end{align}			
Define 
\begin{align}
A_2(t)=	\|(D^3 \phi, D^3 u)\|_{L^2}^2
+\beta_2 \int_{\mathbb{R}^3} D^2 u^h\cdot \nabla D^2 \phi^h\, \mathrm{d}x.	
\end{align}		
Due to the smallness of $\beta_2$ and the Poincar\'{e} inequality for the high-frequency part, we have 
	\begin{align}\label{G4U}
A_2(t)=\|(D^3 \phi, D^3 u)\|_{L^2}^2
	+\beta_2 \int_{\mathbb{R}^3} D^2 u^h\cdot \nabla D^2 \phi^h\, \mathrm{d}x
	\sim \|(D^3 \phi, D^3 u)\|_{L^2}^2.
\end{align}
Solving the differential inequality \eqref{G3U} yields
\begin{align}
\sqrt{A_2(t)}\leq Ce^{-C\delta_0^{\frac{9}{11}}t}\|(D^3\phi_0, D^3u_0)\|_{L^2}+\tilde{C}\delta_0^{\frac{9}{11}}\int_0^t e^{-C\delta_0^{\frac{9}{11}}(t-\tau)}\|D^3\phi^\ell(\tau)\|_{L^2}\mathrm{d}\tau.	
\end{align}	
In terms of \eqref{G4U}, we finally obtain
\begin{align}
 \|(D^3 \phi, D^3 u)\|_{L^2}^2\leq Ce^{-C\delta_0^{\frac{9}{11}}t}\|(D^3\phi_0, D^3u_0)\|_{L^2}^2+\tilde{C}\delta_0^{\frac{9}{11}}\int_0^t e^{-C\delta_0^{\frac{9}{11}}(t-\tau)}\|D^3\phi^\ell(\tau)\|_{L^2}^2\mathrm{d}\tau.	
\end{align}	
Thus, we conclude \eqref{dadu-3}.
	\end{proof}
    
	Collecting the results in Lemmas \ref{d12-lem1}--\ref{d3-lem1} leads to
	\begin{align}\label{d123}
	 &\|(D^k \phi, D^k u)\|_{H^{3-k}}^2\nonumber\\
     &\leq Ce^{-C\delta_0^{\frac{9}{11}}t}\|(D^k\phi_0, D^ku_0)\|_{H^{3-k}}^2
     +\tilde{C}\delta_0^{\frac{9}{11}}\int_0^t e^{-C\delta_0^{\frac{9}{11}}(t-\tau)}\|D^k\phi^\ell(\tau)\|_{L^2}^2\mathrm{d}\tau\quad\,\, 
     \text{for \;$k=1,2,3$.}
	\end{align}
	
	Now, we are in a position to obtain the decay rates of $(D^k \phi, D^k u)$.
	
	\begin{lem}\label{decay-lem-1}
		Under assumption \eqref{priori}, 
		\begin{align*}
			&\|D^k \phi(t)\|_{L^2}\leq \tilde{C} N_0(1+t)^{-\frac{k}{2}} \qquad\,\,\,\,\,\text{for \;$k=1,2,3,$}\\
			&\|D^{k}u(t)\|_{L^2}\leq \tilde{C} N_0(1+t)^{-\frac{1+k}{2}} \qquad\text{for \;$k=0,1,2,$}\\
			&\|D^{3}u(t)\|_{L^2}\leq \tilde{C} N_0(1+t)^{-\frac{3}{2}},
		\end{align*}
		where $N_0:= \|(\phi_0,u_0)\|_{H^3}$.
	\end{lem}
	\begin{proof}
		Based on Lemma \ref{gl-lam}, 
        we define the time-weighted functional $Y(t)$ as
		\begin{align*}
			Y(t)=&\sup\limits_{0\leq \tau\leq t}\Big\{
			\sum_{k=1}^{3}(1+\tau)^{\frac{k}{2}}\|D^k \phi(\tau)\|_{L^2}
			+\sum_{k=0}^{2}(1+\tau)^{\frac{k+1}{2}}\|D^k u(\tau)\|_{L^2}
			+(1+\tau)^{\frac{3}{2}}\|D^3 u(\tau)\|_{L^2}
			\Big\}.
		\end{align*}
		It follows from the definition of  $Y(t)$ that
		\begin{align*}
			\|D^k \phi(t)\|_{L^2}&\leq (1+t)^{-\frac{k}{2}}Y(t) \qquad\quad\text{for \;$k=1,2,3,$}\\
			\|D^{k}u(t)\|_{L^2}&\leq (1+t)^{-\frac{1+k}{2}}Y(t)  \qquad\,\text{for \;$k=0,1,2,$}\\
			\|D^{3}u(t)\|_{L^2}&\leq (1+t)^{-\frac{3}{2}}Y(t).
		\end{align*}
It remains to prove the boundedness of $Y(t)$.
        
		Since
		\begin{align*}
			\|f_1(t)\|_{L^2}
			&
			\leq \|u\|_{L^2}\|\nabla \phi\|_{L^{\infty}}+\frac{\gamma-1}{2}\|\phi\|_{L^\infty}\|\nabla u\|_{L^2}\\
			&\leq C\|u\|_{L^2}\|D^2 \phi\|_{L^2}^{\frac{1}{2}}
			\|D^3 \phi\|_{L^2}^{\frac{1}{2}}+C(\gamma-1)\|D \phi\|_{L^2}^{\frac{1}{2}}
			\|D^2\phi \|_{L^2}^{\frac{1}{2}}\|u\|_{L^2}^{\frac{2}{3}}
			\|D^3 u\|_{L^2}^{\frac{1}{3}} \\
			&\leq C\|u\|_{L^2}\|D^2 \phi\|_{L^2}^{\frac{1}{2}}
			\|D^3 \phi\|_{L^2}^{\frac{1}{3}}\|D^3 \phi\|_{L^2}^{\frac{1}{6}}+C(\gamma-1)\|D\phi\|_{L^2}^{\frac{1}{2}}
			\|D^2\phi \|_{L^2}^{\frac{1}{2}}\|u\|_{L^2}^{\frac{2}{3}}
			\|D^3 u\|_{L^2}^{\frac{1}{6}}\|D^3 u\|_{L^2}^{\frac{1}{6}}\\
			&\leq \tilde{C}(1+t)^{-\frac{3}{2}}\delta_0^{\frac{1}{6}}Y^{\frac{11}{6}}(t)+\tilde{C}(1+t)^{-\frac{4}{3}}\delta_0^{\frac{1}{6}}Y^{\frac{11}{6}}(t)\\
			&\leq \tilde{C} (1+t)^{-\frac{4}{3}}\delta_0^{\frac{1}{6}}Y^{\frac{11}{6}}(t),
		\end{align*}
		\begin{align*}
			\|f_2(t)\|_{L^2}
			&
			\leq \|u\|_{L^2}\|\nabla u\|_{L^{\infty}}+\frac{\gamma-1}{2}\|\phi\|_{L^\infty}\|\nabla \phi\|_{L^2}\\
			&\leq C\|u\|_{L^2}\|D^2 u\|_{L^2}^{\frac{1}{2}}
			\|D^3 u\|_{L^2}^{\frac{1}{2}}+C(\gamma-1)\|D\phi\|_{L^2}^{\frac{1}{2}}
			\|D^2 \phi\|_{L^2}^{\frac{1}{2}}\|D\phi\|_{L^2} \\
			&\leq C\|u\|_{L^2}\|D^2 u\|_{L^2}^{\frac{1}{2}}
			\|D^3 u\|_{L^2}^{\frac{1}{3}}\|D^3 u\|_{L^2}^{\frac{1}{6}}+C(\gamma-1)\|D \phi\|_{L^2}^{\frac{7}{4}}\|D^3 \phi\|_{L^2}^{\frac{1}{8}}\|D^3 \phi\|_{L^2}^{\frac{1}{8}}\\
		&\leq \tilde{C}(1+t)^{-\frac{7}{4}}\delta_0^{\frac{1}{6}}Y^{\frac{11}{6}}(t)+\tilde{C}(1+t)^{-\frac{17}{16}}\delta_0^{\frac{1}{8}}Y^{\frac{15}{8}}(t),
		\end{align*}
		we obtain from the expression of solution \eqref{duhamel} that
		\begin{align}
			\|D \phi^{\ell}\|_{L^2}
			&\leq \|D G_{11}^{\ell}\ast \phi_0\|_{L^2}
			+\|D G_{12}^{\ell}\ast u_0\|_{L^2}
			+\int^t_0 \|D G_{11}^{\ell}(t-\tau)\ast f_1(\tau)\|_{L^2}\mathrm{d}\tau \notag\\
			&\quad + \int^t_0 \|D G_{12}^{\ell}(t-\tau)\ast f_2(\tau)\|_{L^2}\mathrm{d}\tau \notag\\
			&\leq C(1+t)^{-\frac{1}{2}}\|\phi_0\|_{L^2}
			+C(1+t)^{-1}\|u_0\|_{L^2}
			+C \int^t_0 (1+t-\tau)^{-\frac{1}{2}}\|f_1(\tau)\|_{L^2}\mathrm{d}\tau \notag\\
			&\quad + C \int^t_0 (1+t-\tau)^{-1}\|f_2(\tau)\|_{L^2}\mathrm{d}\tau \notag\\
			&\leq C(1+t)^{-\frac{1}{2}}\big(\|\phi_0\|_{L^2}
			+\|u_0\|_{L^2}\big)\nonumber\\
			&\quad + \tilde{C} \int^t_0 (1+t-\tau)^{-\frac{1}{2}} (1+\tau)^{-\frac{4}{3}}\delta_0^{\frac{1}{6}}Y^{\frac{11}{6}}(t)
			\mathrm{d}\tau \notag\\
			&\quad + \tilde{C} \int^t_0 (1+t-\tau)^{-1}\big(
			(1+\tau)^{-\frac{7}{4}}\delta_0^{\frac{1}{6}}Y^{\frac{11}{6}}(t)+(1+\tau)^{-\frac{17}{16}}\delta_0^{\frac{1}{8}}Y^{\frac{15}{8}}(t)
			\big)\mathrm{d}\tau \notag\\
			&\leq \tilde{C}(1+t)^{-\frac{1}{2}}\big(\|\phi_0\|_{L^2}
			+\|u_0\|_{L^2}
			+\delta_0^{\frac{1}{6}}Y^{\frac{11}{6}}(t)+\delta_0^{\frac{1}{8}}Y^{\frac{15}{8}}(t)\big), \label{al-1}
		\end{align}
		where we have used Lemma \ref{gl-lam}. After a similar method, we achieve that
		\begin{align}
			&\|D^2 \phi^{\ell}(t)\|_{L^2}
			\leq \tilde{C}(1+t)^{-1}\big(\|\phi_0\|_{L^2}
			+\|u_0\|_{L^2}
			+\delta_0^{\frac{1}{6}}Y^{\frac{11}{6}}(t)+\delta_0^{\frac{1}{8}}Y^{\frac{15}{8}}(t)\big),\label{al-2}\\
		&\|D^3 \phi^{\ell}(t)\|_{L^2}
			\leq \tilde{C}(1+t)^{-\frac{3}{2}}\big(\|\phi_0\|_{L^2}
			+\|u_0\|_{L^2}
		+\delta_0^{\frac{1}{6}}Y^{\frac{11}{6}}(t)+\delta_0^{\frac{1}{8}}Y^{\frac{15}{8}}(t)\big),\label{al-3}
		\end{align}
	where we used the integration by parts for the third-derivatives to get \eqref{al-3}.
		
		Substituting \eqref{al-1} into \eqref{dadu-1} and then using Gr\"{o}nwall's inequality yields
		\begin{align}\label{022701}
			\|(D \phi,D u)(t)\|_{H^2}
			\leq \tilde{C}(1+t)^{-\frac{1}{2}}\Big(N_0
			+\delta_0^{\frac{1}{6}}Y^{\frac{11}{6}}(t)+\delta_0^{\frac{1}{8}}Y^{\frac{15}{8}}(t)\Big).
		\end{align}
		Similarly, substituting \eqref{al-2}--\eqref{al-3} into \eqref{dadu-2},\eqref{dadu-3} respectively, we arrive at
		\begin{align}
			&\|(D^2 \phi,D^2 u)(t)\|_{H^1}
			\leq \tilde{C}(1+t)^{-1}\big(N_0
			+\delta_0^{\frac{1}{6}}Y^{\frac{11}{6}}(t)
             +\delta_0^{\frac{1}{8}}Y^{\frac{15}{8}}(t)\big),\label{022702}\\
		&\|(D^3 \phi, D^3 u)(t)\|_{L^2}
			\leq \tilde{C}(1+t)^{-\frac{3}{2}} \big(N_0
			+\delta_0^{\frac{1}{6}}Y^{\frac{11}{6}}(t)+\delta_0^{\frac{1}{8}}Y^{\frac{15}{8}}(t)\big).
		\end{align}
		By the damping structure of velocity, we solve \eqref{au1} as
		\begin{align}\label{022602}
			u=e^{-t}u_0-\int_0^t e^{-(t-\tau)}\big(u\cdot\nabla u+\frac{\gamma-1}{2}\phi\cdot \nabla \phi+\sqrt{\gamma}\nabla \phi\big)(\tau) \mathrm{d}\tau.
		\end{align}
		Then we deduce from  \eqref{022701}--\eqref{022702} and \eqref{022602} that
		\begin{align*}
			\|D^{k}u(t)\|_{L^2}\leq \tilde{C}(1+t)^{-\frac{1+k}{2}}\Big(N_0
			+\delta_0^{\frac{1}{6}}Y^{\frac{11}{6}}(t)+\delta_0^{\frac{1}{8}}Y^{\frac{15}{8}}(t)\Big)
            \quad \quad\text{for \;$k=0,1,2.$}
		\end{align*}	
		According to the definition of $Y(t)$, the combination of the above results yields
		\begin{align}
			Y(t)
			\leq CN_0
			+\tilde{C}\delta_0^{\frac{1}{6}}Y^{\frac{11}{6}}(t)+\tilde{C}\delta_0^{\frac{1}{8}}Y^{\frac{15}{8}}(t),\label{Y(t)}
		\end{align}
		where $N_0 := \|(\phi_0,u_0)\|_{H^3}$.
		Due to the smallness of  $\delta_0$, we deduce from \eqref{Y(t)} that
		\begin{align*}
			Y(t)\leq \tilde{C}N_0.
		\end{align*}
		Therefore, we obtain
		\begin{align*}
			&\|D^k \phi(t)\|_{L^2}\leq \tilde{C} N_0(1+t)^{-\frac{k}{2}} \qquad\quad\text{for \;$k=1,2,3,$}\\
			&\|D^{k}u(t)\|_{L^2}\leq \tilde{C} N_0(1+t)^{-\frac{1+k}{2}} \qquad\,\,\text{for \;$k=0,1,2,$}\\
		&\|D^{3}u(t)\|_{L^2}\leq \tilde{C} N_0(1+t)^{-\frac{3}{2}}.
		\end{align*}
		This completes the proof.
	\end{proof}
	It should be pointed out that the above decay rate for the highest-order derivative of $u$ is not optimal. In fact, it can be improved. In the following lemma, we obtain the optimal decay rates of $\|D^3u\|_{L^2}$.
	
	\begin{lem}\label{decay-lem-2}
		Under the {\it a priori} assumption \eqref{priori},
		$$
		\|D^{3}u(t)\|_{L^2}\ \le\ C \,(N_0+N_0^2)\,(1+t)^{-2}.
		$$ 
	\end{lem}
	
	\begin{proof}
		Applying $D^3$ to \eqref{high-f}, we have
		\begin{equation}\label{D3auf}
			\left\{
			\begin{aligned}
				& \partial_t D^3 \phi^h + \sqrt{\gamma}{\mathrm{div}}D^3u^h= D^3f^h_1,\\
				& \partial_t D^3u^h+\sqrt{\gamma}\nabla D^3\phi^h+D^3u^h =D^3f^h_2.
			\end{aligned}
			\right.
		\end{equation}
		Taking the $L^2$-inner product of \eqref{D3auf} with $(D^3\phi^{h}, D^3 u^{h})$ yields
			\begin{align}
				&\frac{1}{2}\frac{\mathrm{d}}{\mathrm{d}t}\|(D^3 \phi^{h},D^3u^{h})\|^2_{L^2}
				+\|D^3u^{h}\|^2_{L^2} \nonumber\\
				&= \int_{\mathbb{R}^3}D^3\phi^{h}D^3 f_1^{h} \, \mathrm{d}x
				+\int_{\mathbb{R}^3}D^3u^{h}\cdot D^3f_2^{h} \, \mathrm{d}x \nonumber\\
				&=-\int_{\mathbb{R}^3}D^3\phi^{h}D^3 (u\cdot \nabla \phi)^{h} \, \mathrm{d}x -\frac{\gamma-1}{2}  \int_{\mathbb{R}^3}D^3\phi^{h}D^3 (\phi  {\mathrm{div}} u)^{h} \, \mathrm{d}x \nonumber\\
				&\quad\quad-\int_{\mathbb{R}^3}D^3u^{h}\cdot D^3(u\cdot \nabla u)^{h} \, \mathrm{d}x -\frac{\gamma-1}{2}  \int_{\mathbb{R}^3}D^3u^{h}\cdot D^3(\phi\cdot \nabla \phi)^{h} \, \mathrm{d}x \nonumber\\
				&= -\int_{\mathbb{R}^3}D^3\phi^{h}D^3 (u\cdot \nabla \phi)^{h} \, \mathrm{d}x -\int_{\mathbb{R}^3}D^3u^{h}\cdot D^3(u\cdot \nabla u)^{h} \, \mathrm{d}x \nonumber\\
				& \quad\quad -\frac{\gamma-1}{2} \int_{\mathbb{R}^3}\Big( D^3\phi^{h}D^3 (\phi  {\mathrm{div}} u)^{h} +D^3 u^{h}\cdot D^3(\phi\nabla \phi)^{h} \Big)\, \mathrm{d}x \nonumber\\
				&:= I_1+I_2+I_3.\label{D3hau}
			\end{align}
		For the first term on the right-hand side of \eqref{D3hau}, we use Lemmas \ref{jiaohuan}, \ref{kongzhi},
        and \ref{decay-lem-1} to derive
		\begin{align*}
        I_1&=-\int_{\mathbb{R}^3}D^3\phi^{h}D^3 (u\cdot \nabla \phi)^{h} \, \mathrm{d}x \\
			&=\int_{\mathbb{R}^3}D^3\phi^{h}D^3 (u\cdot \nabla \phi) \, \mathrm{d}x
			-\int_{\mathbb{R}^3}D^3\phi^{h}D^3 (u\cdot \nabla \phi)^{\ell} \, \mathrm{d}x\\
			&= \int_{\mathbb{R}^3}D^3\phi^{h}\Big(D^3 (u\cdot \nabla \phi)
			- u\cdot \nabla D^3\phi\Big)\, \mathrm{d}x\\
			& \quad +\int_{\mathbb{R}^3}D^3\phi^{h}(u\cdot \nabla D^3\phi^{h})\, \mathrm{d}x
			+\int_{\mathbb{R}^3}D^3\phi^{h}(u\cdot \nabla D^3\phi^{\ell})\, \mathrm{d}x\\
			&\quad -\int_{\mathbb{R}^3}D^3\phi^{h}D^3 (u\cdot \nabla \phi)^{\ell} \, \mathrm{d}x \\
			&\leq \|D^3 \phi^h\|_{L^2}\|[D^3,u\cdot \nabla]\phi\|_{L^2}
			+C\|\nabla u\|_{L^{\infty}}\|D^3 \phi^{h}\|^2_{L^2}\\
			&\quad +\|u\|_{L^{\infty}}\|\nabla D^3 \phi^{\ell}\|_{L^2}\|D^3 \phi^h \|_{L^2}
			+\|D^3 \phi^h\|_{L^2}\|D^3(u\cdot \nabla \phi)^{\ell}\|_{L^2} \\
			&\leq C\|D^3 \phi^h\|_{L^2}\big(\|\nabla u\|_{L^{\infty}}\|D^3 \phi\|_{L^2}
			+\|\nabla \phi\|_{L^{\infty}}\|D^3 u\|_{L^2}\big)\\
			& \quad+C\|\nabla u\|_{L^{\infty}}\|D^3 \phi^{h}\|^2_{L^2}
			+\|u\|_{L^{\infty}}\|\nabla D^3 \phi^{\ell}\|_{L^2}\|D^3 \phi^h \|_{L^2}\\
			&\quad +C\|D^3 \phi^h\|_{L^2}\big(\|u\|_{\infty}\|D^3 \phi\|_{L^2}
			+\|\nabla \phi\|_{L^\infty}\|D^2 u\|_{L^2}+\|\nabla u\|_{L^\infty}\|D^2\phi\|_{L^2}\big)\\
			&\leq C\|D^3 \phi\|_{L^2}\big(\|\nabla u\|_{L^{\infty}}\|D^3 \phi\|_{L^2}
			+\|\nabla \phi\|_{L^{\infty}}\|D^3 u\|_{L^2}\big)\\
			& \quad+C\|\nabla u\|_{L^{\infty}}\|D^3 \phi\|^2_{L^2}
			+C\|u\|_{L^{\infty}}\|D^3 \phi\|_{L^2}^2\\
			&\quad+C\|D^3 \phi\|_{L^2}\big(\|u\|_{L^\infty}\|D^3 \phi\|_{L^2}+\|\nabla u\|_{L^\infty}\|D^2\phi\|_{L^2}
			+\|\nabla \phi\|_{L^\infty}\|D^2 u\|_{L^2}\big)\\
			&\leq\tilde{C}N_0^3(1+t)^{-\frac{17}{4}}.
		\end{align*}
		
		Similarly, for the second term on the right-hand side of \eqref{D3hau}, we have
		\begin{align*}
			I_2&=-\int_{\mathbb{R}^3}D^3u^{h}\cdot D^3(u\cdot \nabla u)^{h} \, \mathrm{d}x\\
			&\leq C\|D^3 u^h\|_{L^2}\|\nabla u\|_{L^{\infty}}\|D^3 u\|_{L^2}
			\\
			& \quad +C\|\nabla u\|_{L^{\infty}}\|D^3 u^{h}\|^2_{L^2}
			+\|u\|_{L^{\infty}}\|\nabla D^3 u^{\ell}\|_{L^2}\|D^3 u^h \|_{L^2}\\
			&\quad+C\|D^3 u^h\|_{L^2}\big(\|u\|_{\infty}\|D^3 u\|_{L^2}
			+\|\nabla u\|_{L^\infty}\|D^2 u\|_{L^2}\big)\\
			&\leq \tilde{C}N_0^3(1+t)^{-\frac{17}{4}}.
		\end{align*}
		
		For the third term on the right-hand side of \eqref{D3hau}, we deduce
		\begin{align*}
			I_3&=-\frac{\gamma-1}{2} \int_{\mathbb{R}^3}\big( D^3\phi^{h}D^3 (\phi \,{\mathrm{div}} u)^{h} +D^3 u^{h}\cdot D^3(\phi\nabla \phi)^{h} \big) \, \mathrm{d}x \\
			&=-\frac{\gamma-1}{2} \int_{\mathbb{R}^3} \big( D^3\phi^{h} \cdot \phi D^3  {\mathrm{div}} u^{h} +D^3 u^{h}\cdot \phi D^3 \nabla \phi^{h} \big)\, \mathrm{d}x \\
			& \quad-\frac{\gamma-1}{2} \int_{\mathbb{R}^3} \big( D^3\phi^{h} [D^3,\phi {\mathrm{div}}]u +D^3 u^{h}\cdot [D^3,\phi \nabla ] \phi\big)\, \mathrm{d}x \\
            &\quad-\frac{\gamma-1}{2} \int_{\mathbb{R}^3} \big(D^3\phi^h\cdot\phi D^3\text{div}u^\ell+D^3u^h\phi D^3\nabla\phi^\ell\big) \mathrm{d}x\\
            &\quad+\frac{\gamma-1}{2} \int_{\mathbb{R}^3}\big( D^3\phi^{h}D^3 (\phi \,{\mathrm{div}} u)^{\ell} +D^3 u^{h}\cdot D^3(\phi\nabla \phi)^{\ell} \big) \, \mathrm{d}x \\
			&=-\frac{\gamma-1}{2} \int_{\mathbb{R}^3}\phi {\mathrm{div}}(D^3\phi^{h} D^3 u^{h} )\, \mathrm{d}x
            -\frac{\gamma-1}{2} \int_{\mathbb{R}^3} \big( D^3\phi^{h} [D^3,\phi {\mathrm{div}}]u +D^3 u^{h}\cdot [D^3,\phi \nabla ] \phi\big)\, \mathrm{d}x \\
            &\quad-\frac{\gamma-1}{2} \int_{\mathbb{R}^3} \big(D^3\phi^h\cdot\phi D^3\text{div}u^\ell+D^3u^h\phi D^3\nabla\phi^\ell\big) \mathrm{d}x\\
             &\quad+\frac{\gamma-1}{2} \int_{\mathbb{R}^3}\big( D^3\phi^{h}D^3 (\phi \,{\mathrm{div}} u)^{\ell} +D^3 u^{h}\cdot D^3(\phi\nabla \phi)^{\ell} \big) \, \mathrm{d}x \\
			&\leq\tilde{C} \big( \|\nabla \phi\|_{L^{\infty}}\|D^3 \phi^h\|_{L^2}\|D^3 u^h\|_{L^2}
+\|\nabla\phi\|_{L^\infty}\|D^3u\|_{L^2}\|D^3\phi^h\|_{L^2}\\
&\quad+\|\text{div} u\|_{L^{\infty}}\|D^3 \phi^h\|_{L^2}\|D^3u\|_{L^2}+\|\nabla\phi\|_{L^\infty}\|D^3\phi\|_{L^2}\|D^3u^h\|_{L^2}
 +\|\phi\|_{L^\infty}\|D^3\phi\|_{L^2}\|D^3u\|_{L^2}\big)\\
&\quad +\tilde{C}\big(\|D^3\phi^h\|_{L^2}\|D^2(\phi\text{div}u)\|_{L^2}+\|D^3u^h\|_{L^2}
\|D^2(\phi\nabla\phi)\|_{L^2}\big)\\
			&\leq\tilde{C} N_0^3(1+t)^{-\frac{17}{4}}.
		\end{align*}
		Then we arrive at
		\begin{align}\label{D3hau-t}
			\frac{1}{2}\frac{\mathrm{d}}{\mathrm{d}t}\|(D^3 \phi^{h},D^3u^{h})\|^2_{L^2}
			+\|D^3u^{h}\|^2_{L^2}
			\leq \tilde{C} N_0^3(1+t)^{-\frac{17}{4}}.
		\end{align}
		
		Now, we come back to \eqref{d3ah} to obtain the more accurate estimates.
		For the last two terms on the right-hand side of \eqref{d3ah}, we follow the method used above to obtain 
		\begin{align}\label{lasttwo}
			\Big|\int_{\mathbb{R}^3} D^2 u^h\cdot  \nabla D^2 f^h_1
			\, \mathrm{d}x\Big|
			+\Big|\int_{\mathbb{R}^3} \nabla D^2 \phi^h\cdot D^2 f^h_2
			\, \mathrm{d}x\Big|
			\leq \tilde{C} N_0^3(1+t)^{-\frac{17}{4}}.
		\end{align}
		Putting \eqref{lasttwo} into \eqref{d3ah} and choosing $\epsilon=\frac{1}{2}$, we obtain
		\begin{equation}\label{d3au-2}
			\frac{1}{2}\|D^3 \phi^h\|_{L^2}^2
			+\frac{\mathrm{d}}{\mathrm{d}t}\int_{\mathbb{R}^3} D^2 u^h\cdot \nabla D^2 \phi^h\, \mathrm{d}x \\
			\leq \tilde{C}\|D^3 u^h\|^2_{L^2}+\tilde{C} N_0^3(1+t)^{-\frac{17}{4}}.
		\end{equation}
		Multiplying \eqref{d3au-2} by $\beta_3>0$ and adding the resulting equations to \eqref{D3hau-t} yield
		\begin{align*}
			&\frac{\mathrm{d}}{\mathrm{d}t}
			\Big(\frac{1}{2}\|(D^3 \phi^{h},D^3u^{h})\|^2_{L^2}
			+\beta_3\int_{\mathbb{R}^3} D^2 u^h\cdot \nabla D^2 \phi^h\, \mathrm{d}x\Big)\\
			&\quad+(1-\tilde{C}\beta_3)\|D^3u^{h}\|^2_{L^2}+\frac{\beta_3}{2}\|D^3 \phi^h\|_{L^2}^2 \\
			&\leq \tilde{C}(1+\beta_3)N_0^3(1+t)^{-\frac{17}{4}}.
		\end{align*}
	Let
	\begin{align*}
	A_2(t)=	\frac{1}{2}\|(D^3 \phi^{h},D^3u^{h})\|^2_{L^2}
	+\beta_3\int_{\mathbb{R}^3} D^2 u^h\cdot \nabla D^2 \phi^h\, \mathrm{d}x.
	\end{align*}		
	Choosing $\beta_3$ suitably small such that 
	\begin{align}\label{AD2}
A_2(t)\sim 	\|(D^3 \phi^{h},D^3u^{h})\|^2_{L^2},
	\end{align}	
	 and  $1-\tilde{C}\beta_3 \geq\frac{1}{2}$, we obtain from the above inequality that
		\begin{align*}
			\frac{\mathrm{d}}{\mathrm{d}t}A_2(t)+CA_2(t)\leq \tilde{C}N_0^3(1+t)^{-\frac{17}{4}}.
		\end{align*}
		Then it follows from the Gr\"{o}nwall inequality and \eqref{AD2} that
		\begin{align}\label{D3u-h}
			\|(D^3 \phi^{h},D^3u^{h})\|_{L^2}
			\leq \tilde{C}N_0^{\frac{3}{2}}(1+t)^{-\frac{17}{8}}\leq \tilde{C}(N_0+N_0^2)(1+t)^{-\frac{17}{8}}.
		\end{align}
		
		For the low-frequency part of $D^3 u$, we obtain from Lemmas \ref{gl-lam} and \ref{decay-lem-1} that
        \begin{align}
			\|D^3 u^{\ell}\|_{L^2}
			&\leq \|D^3 G_{21}^{\ell}\ast \phi_0\|_{L^2}
			+\|D^3 G_{22}^{\ell}\ast u_0\|_{L^2}
			+\int^t_0 \|D^3 G_{21}^{\ell}(t-\tau)\ast f_1(\tau)\|_{L^2}\mathrm{d}\tau \nonumber\\
			&\quad + \int^t_0 \|D^3 G_{22}^{\ell}(t-\tau)\ast f_2(\tau)\|_{L^2}\mathrm{d}\tau \nonumber\\
			&\leq C(1+t)^{-2}\|\phi_0\|_{L^2}
			+C(1+t)^{-\frac{5}{2}}\|u_0\|_{L^2}
			+\int^{\frac{t}{2}}_0\|D^3G_{21}^\ell(t-\tau)\ast f_1(\tau)\|_{L^2}\mathrm{d}\tau \nonumber\\
			&\quad+\int_{\frac{t}{2}}^t\|D^2G_{21}^\ell(t-\tau)\ast D f_1(\tau)\|_{L^2}\mathrm{d}\tau
			+\int^{\frac{t}{2}}_0\|D^3G_{22}^\ell(t-\tau)\ast f_2(\tau)\|_{L^2}\mathrm{d}\tau \nonumber\\
            &\quad +\int_{\frac{t}{2}}^t\|DG_{22}^\ell(t-\tau)\ast D^2f_2(\tau)\|_{L^2}\mathrm{d}\tau \nonumber\\
			&\leq CN_0(1+t)^{-2}
			+C\int^{\frac{t}{2}}_0 (1+t-\tau)^{-2}(\|(u\cdot \nabla \phi)(\tau)\|_{L^2}+\|(\phi\text{div}u)(\tau)\|_{L^2})\mathrm{d}\tau
			\nonumber\\
			&\quad +C\int_{\frac{t}{2}}^t (1+t-\tau)^{-\frac{3}{2}}(\|D(u\cdot \nabla \phi)(\tau)\|_{L^2}+\|D(\phi\text{div}u)(\tau)\|_{L^2})\mathrm{d}\tau \nonumber\\
            &\quad+ C\int_0^{\frac{t}{2}} (1+t-\tau)^{-\frac{5}{2}}(\|(u\cdot \nabla u)(\tau)\|_{L^2}+\|(\phi \nabla \phi)(\tau)\|_{L^2})\mathrm{d}\tau\nonumber\\
         &\quad+C\int_{\frac{t}{2}}^t(1+t-\tau)^{-\frac{3}{2}}(\|D^2(u\cdot \nabla u)(\tau)\|_{L^2}+\|D^2(\phi \nabla \phi)(\tau)\|_{L^2})\mathrm{d}\tau \nonumber\\
			&\leq CN_0(1+t)^{-2}
			+\tilde{C}N_0^2 \int^{\frac{t}{2}}_0 (1+t-\tau)^{-2}(1+\tau)^{-\frac{7}{4}}\mathrm{d}\tau\nonumber\\
			&\quad +\tilde{C}N_0^2 \int_{\frac{t}{2}}^t (1+t-\tau)^{-\frac{3}{2}}(1+\tau)^{-\frac{9}{4}}\mathrm{d}\tau+ \tilde{C}N_0^2 \int^{\frac{t}{2}}_0 (1+t-\tau)^{-\frac{5}{2}}(1+\tau)^{-\frac{5}{4}}\mathrm{d}\tau \nonumber\\
            &\quad +\tilde{C}N_0^2 \int_{\frac{t}{2}}^t (1+t-\tau)^{-\frac{3}{2}}(1+\tau)^{-\frac{9}{4}}\mathrm{d}\tau \nonumber\\
			&\leq \tilde{C}(N_0+N_0^2)(1+t)^{-2}. \label{D3u-l}
		\end{align}     
		This together with \eqref{D3u-h}--\eqref{D3u-l}, yields
		\begin{equation*}
			\|D^3 u\|_{L^2} \leq \|D^3 u^h\|_{L^2}+\|D^3 u^{\ell}\|_{L^2}
			\leq \tilde{C} (N_0+N_0^2)(1+t)^{-2}.
		\end{equation*}
		Then the proof is completed.
	\end{proof}
	
	Combining the results in \rm{Lemmas} \ref{decay-lem-1}--\ref{decay-lem-2}, and noticing the equivalence of $\|D^k \phi\|_{L^2}$ and $\|D^k (\rho-1)\|_{L^2}$, we obtain the decay rates of $(\rho-1,u)$.
\begin{lem}\label{decay-lem-4}
		Under the {\it a priori} assumption \eqref{priori},
		\begin{align*}
			\|D^{k}(\rho-1)(t)\|_{L^2}&\leq \tilde{C}N_0(1+t)^{-\frac{k}{2}} ~\qquad\quad\text{for \,$k=0,1,2,3$,}\\
			\|D^{k} u(t)\|_{L^2}&\leq \tilde{C}N_0(1+t)^{-\frac{k+1}{2}} ~\qquad\,\,\text{for \,$k=0,1,2$,}\\
			\|D^3u(t)\|_{L^2}&\leq \tilde{C}(N_0+N_0^2)(1+t)^{-2}.
		\end{align*}
	\end{lem}
	We are now in a position to prove  Theorem \ref{main}.
	
\medskip
	\noindent\emph{\textbf{Proof of Theorem \ref{main}\rm{.}}} By the local existence (\rm{Theorem}~\ref{existence theorem}) and the regularity
	criterion (\rm{Theorem}~\ref{thm3.1}) for the Cauchy problem \eqref{euler}--\eqref{far},
	it suffices to close the {\it a priori} assumption \eqref{priori}.
	
	By the Sobolev interpolation and the equivalence \(\|\phi_0\|_{H^3}\sim\|\rho_0-1\|_{H^3}\)
	(valid under the uniform bounds on \(\rho\)), we have
	$$
	N_0=\|(\phi_0,u_0)\|_{H^3}\le \tilde{C}\|(\rho_0-1,u_0)\|_{H^3}\le \tilde{C}(M_0+\delta_0).
	$$
	Then, using \(H^2(\mathbb{R}^3)\hookrightarrow L^\infty\) and the Gagliardo--Nirenberg inequality,
	\begin{align*}
	\|\nabla \phi\|_{L^\infty}\ \lesssim\ \|D^2\phi\|_{L^2}^{\frac{1}{2}}\,\|D^3\phi\|_{L^2}^{\frac{1}{2}},
	\qquad
	\|\nabla u\|_{L^\infty}\ \lesssim\ \|D^2u\|_{L^2}^{\frac{1}{2}}\,\|D^3u\|_{L^2}^{\frac{1}{2}}.
	\end{align*}
	Hence, by \rm{Lemma}~\ref{decay-lem-4},
	\begin{align*}
		&\int_0^T\big(\|\nabla \phi\|_{L^\infty}+\|\nabla u\|_{L^\infty}\big)\,\mathrm{d}t\\
	   &\leq C \int_0^T \big(
		\| D^2 \phi\|_{L^2}^{\frac{1}{2}}
		\|D^3 \phi\|_{L^2}^{\frac{1}{2}}
		+\|D^2 u\|_{L^2}^{\frac{1}{2}}
		\|D^3 u\|_{L^2}^{\frac{1}{2}} \big)\mathrm{d}t\\
		&\leq C \int_0^T \big(
		\|D^2 \phi\|_{L^2}^{\frac{1}{2}}
		\|D^3 \phi\|_{L^2}^{\frac{3}{8}}
		\|D^3\phi\|_{L^2}^{\frac{1}{8}}
		+\|D^2 u\|_{L^2}^{\frac{1}{2}}
		\|D^3 u\|_{L^2}^{\frac{1}{3}}
		\|D^3 u\|_{L^2}^{\frac{1}{6}} \big)\mathrm{d}t\\
		&\leq \tilde{C} \int_0^T (1+t)^{-\frac{1}{2}}
		(1+t)^{-\frac{9}{16}}\delta_0^{\frac{1}{8}}
		N_0^{\frac{7}{8}} \mathrm{d}t
		+ \tilde{C} \int_0^T (1+t)^{-\frac{3}{4}}
		(1+t)^{-\frac{2}{3}}\delta_0^{\frac{1}{6}}N_0^{\frac{1}{2}}
		(N_0+N_0^2)^{\frac{1}{3}}\mathrm{d}t\\
		&\leq \tilde{C}\delta_0^{\frac{1}{8}}
		N_0^{\frac{7}{8}} \int_0^T (1+t)^{-\frac{17}{16}}\mathrm{d}t
		+\tilde{C}\delta_0^{\frac{1}{6}}
		N_0^{\frac{1}{2}}(N_0+N_0^2)^{\frac{1}{3}} \int_0^T (1+t)^{-\frac{17}{12}}\mathrm{d}t
		\\
		&\leq \tilde{C}\delta_0^{\frac{1}{8}}(M_0+\delta_0)^{\frac{7}{8}}+\tilde{C}\delta_0^{\frac{1}{6}}(M_0+\delta_0)^{\frac{1}{2}}(M_0+\delta_0+M_0^2+\delta_0^2)^{\frac{1}{3}}.
	\end{align*}
	According to the condition that $0<M_0\leq\delta_0^{-\frac{1}{11}}$, we obtain from the smallness of  $\delta_0$ that
\begin{align*}
	\int_0^T (\|\nabla \phi\|_{L^{\infty}}
	+\|\nabla u\|_{L^{\infty}})\mathrm{d}t
	\leq C\delta_0^{\frac{1}{22}}+C\delta_0^{\frac{2}{33}}\leq \frac{1}{2}\delta.
\end{align*}
This closes the {\it a priori} assumption \eqref{priori}, so the solution extends globally.

Therefore, for the  isentropic case $\gamma>1$, the Cauchy problem
	\eqref{euler}--\eqref{far} admits a unique global classical solution
	\((\rho-1,u)\in C([0,\infty);H^3(\mathbb{R}^3))\),
	and the decay rates in \rm{Lemma}~\ref{decay-lem-4} hold.
	\hfill\(\square\)

\section{Isothermal limit of solutions of the isentropic 
Euler equations}\label{s2}
In this section, we establish the isothermal limit of the solutions of 
the Cauchy problem of the isentropic compressible Euler equations 
with damping under the framework of classical solutions.

\medskip
\noindent{\it\textbf{Proof of Theorem \ref{thm:gamma_to_1}\rm{.}}} 
We add a subscript $\gamma$ to $(\phi,u)$ to indicate its dependence on $\gamma$ and rewrite \eqref{eq:isentropic} as
		\begin{equation}\label{eq:sym_gamma}
			\left\{
			\begin{aligned}
				&\partial_t \phi_\gamma + \bar\sigma(\gamma)\,\mathrm{div}u_\gamma
				= -\,u_\gamma\cdot\nabla\phi_\gamma - \frac{\gamma-1}{2}\,\phi_\gamma\,\mathrm{div}u_\gamma,\\[2pt]
				&\partial_t u_\gamma + \bar\sigma(\gamma)\,\nabla\phi_\gamma + u_\gamma
				= -\,u_\gamma\cdot\nabla u_\gamma - \frac{\gamma-1}{2}\,\phi_\gamma\,\nabla\phi_\gamma .
			\end{aligned}\right.
		\end{equation}
Note that all the estimates in \S~\ref{s1} are uniform with respect 
to $\gamma \in (1,3]$. In particular, using \eqref{aH3}, we derive 
$$
\sup\limits_{t\in[0,\infty)}\|(\phi_\gamma,u_\gamma)(t)\|_{H^3}\leq C,
$$
which, together with \eqref{eq:sym_gamma}, yields 
$$\sup\limits_{t\in[0,\infty)}\|(\partial_t\phi_\gamma,\partial_t u_\gamma)(t)\|_{H^2}\leq C.$$	
Thus, we obtain from Lemma \ref{AL} that, after further extracting a subsequence,
\begin{equation*}
  (\phi_\gamma, u_\gamma)
  \rightarrow (\ln \rho-\ln \rho_*, u)
  \qquad\,\,
  \mathrm{strongly}\;\mathrm{in}\;\, C([0,T];H^{3-\eta}_{\mathrm{loc}}(\mathbb{R}^3))
\end{equation*}
for any $\eta\in(0,3)$ and any finite time $T>0$. After the standard diagonal argument,
we further obtain
\begin{equation*}
	(\phi_\gamma, u_\gamma)
	\rightarrow (\ln \rho-\ln \rho_*, u)
	\qquad\,\, \mathrm{strongly}\;\mathrm{in}\;\, C([0,+\infty);H^{3-\eta}_{\mathrm{loc}}(\mathbb{R}^3)).
\end{equation*}
Moreover, taking $\gamma \rightarrow 1$  in \eqref{eq:sym_gamma}, 
we obtain the strong limit that is the classical solution of the Cauchy
problem of the isothermal compressible Euler equations with damping \eqref{au0}.
Finally, the uniqueness of solutions of the Cauchy problem 
of the limit system \eqref{au0} means that the above convergence 
holds without restricting to a subsequence.
Thus, we complete the proof of Theorem \ref{thm:gamma_to_1}. 	

\hfill\(\square\)
	
	\medskip
	\indent
	{\bf Acknowledgements:}
The research of Gui-Qiang G. Chen was supported in part by the UK Engineering and
Physical Sciences Research Council under Awards EP/L015811/1 and EP/V008854. The research of Feimin Huang was supported in part by the National
Key R\&D Program of China under Grant No. 2021YFA1000800. The research of
Houzhi Tang was supported in part by the National Natural Science Foundation of
China under Grant No.12501293 and the Anhui Provincial Natural Science Foundation
under Grant No.2408085QA031. The research of Shuxing Zhang was supported in part
by the National Natural Science Foundation of China under Grant No.12501300,
the Basic Research Program of Jiangsu Province under Grant No.BK20250881, and the Natural Science Foundation of the Jiangsu Higher
Education Institutions of China under Grant No.25KJB110001.  The
research of Weiyuan Zou was supported in part by the China Scholarship Council
under Grant No.202406880015 and the National Natural Science Foundation of China
under Grant No.12001033.
	\bibliographystyle{plain}

\begin{thebibliography}{aaa}
\bibitem{Daf}
C. M. Dafermos, A system of hyperbolic conservation laws with frictional damping,
Z. Angew. Math. Phys. 46, Special Issue (1995), 294--307.

\bibitem{DP}
C. M. Dafermos and R. Pan, Global BV solutions for the $p$-system with frictional
damping, SIAM J. Math. Anal. 41 (2009), 1190--1205.

\bibitem{DCL-1}
X. Q. Ding, G.-Q. Chen, and P. Z. Luo, Convergence of the Lax--Friedrichs scheme
for isentropic gas dynamics I, Acta Math. Sci. (English Ed.), 5 (1985), 415--432.

\bibitem{DCL-2}
X. Q. Ding, G.-Q. Chen, and P. Z. Luo, Convergence of the Lax--Friedrichs scheme
for isentropic gas dynamics II, Acta Math. Sci. (English Ed.), 5 (1985), 433--472.

\bibitem{DCL-3}
X. Q. Ding, G.-Q. Chen, and P. Z. Luo, Convergence of the fractional step
Lax--Friedrichs scheme and Godunov scheme for the isentropic system of gas
dynamics, Commun. Math. Phys. 121 (1989), 63--84.

\bibitem{Di}
R. J. DiPerna, Convergence of the viscosity method for isentropic gas dynamics,
Commun. Math. Phys. 91 (1983), 1--30.

\bibitem{GW}
Y. Guo and Y. Wang, Decay of dissipative equations and negative Sobolev spaces,
Comm. Partial Differential Equations 37 (2012), 2165--2208.

\bibitem{HL}
L. Hsiao and T.-P. Liu, Convergence to nonlinear diffusion waves for solutions of
a system of hyperbolic conservation laws with damping, Commun. Math. Phys. 143
(1992), 599--605.

\bibitem{HL2}
L. Hsiao and T.-P. Liu, Nonlinear diffusive phenomena of nonlinear hyperbolic
systems, Chinese Ann. Math. Ser. B 14 (1993), 465--480.

\bibitem{HLuo}
L. Hsiao and T. Luo, Nonlinear diffusive phenomena of solutions for the system of
compressible adiabatic flow through porous media, J. Differential Equations, 125
(1996), 329--365.

\bibitem{HS}
L. Hsiao and D. Serre, Global existence of solutions for the system of
compressible adiabatic flow through porous media, SIAM J. Math. Anal. 27
(1996), 70--77.

\bibitem{HuangP}
F. Huang and R. Pan, Convergence rate for compressible Euler equations with
damping and vacuum, Arch. Ration. Mech. Anal. 166 (2003), 359--376.

\bibitem{HuangP1}
F. Huang, P. Marcati, and R. Pan, Convergence to Barenblatt solution for the
compressible Euler equations with damping and vacuum, Arch. Ration. Mech. Anal.
176 (2005), 1--24.

\bibitem{HuangP2}
F. Huang and R. Pan, Asymptotic behavior of the solutions to the damped
compressible Euler equations with vacuum, J. Differential Equations, 220 (2006),
207--233.

\bibitem{HuangP3}
F. Huang, R. Pan, and Z. Wang, $L^1$ convergence to the Barenblatt solution for
compressible Euler equations with damping, Arch. Ration. Mech. Anal. 200
(2011), 665--689.

\bibitem{HWU}
F. Huang and X. Wu, The time-asymptotic expansion for the compressible Euler
equations with damping, J. Lond. Math. Soc. (2), 109 (2024), Paper No. e12908,
29 pp.

\bibitem{HW}
F. Huang and Z. Wang, Convergence of viscosity solutions for isothermal gas
dynamics, SIAM J. Math. Anal. 34 (2002), 595--610.

\bibitem{JQ2012}
Q. Jiu and X. Zheng, Global well-posedness of the compressible Euler with damping
in Besov spaces, Math. Methods Appl. Sci. 35 (2012), 1570--1586.

\bibitem{LPS}
P.-L. Lions, B. Perthame, and P. E. Souganidis, Existence and stability of entropy
solutions for the hyperbolic systems of isentropic gas dynamics in Eulerian and
Lagrangian coordinates, Comm. Pure Appl. Math. 49 (1996), 599--638.

\bibitem{LPT}
P.-L. Lions, B. Perthame, and E. Tadmor, Kinetic formulation of the isentropic gas
dynamics and $p$-systems, Commun. Math. Phys. 163 (1994), 415--431.

\bibitem{LY}
T.-P. Liu and T. Yang, Compressible Euler equations with vacuum, J. Differential
Equations, 140 (1997), 223--237.

\bibitem{LY-2}
T.-P. Liu and T. Yang, Compressible flow with vacuum and physical singularity,
Methods Appl. Anal. 7 (2000), 495--509.

\bibitem{LuoY}
T. Luo and T. Yang, Interaction of elementary waves for compressible Euler
equations with frictional damping, J. Differential Equations 161 (2000), 42--86.

\bibitem{M}
A. J. Majda, \emph{Compressible Fluid Flow and Systems of Conservation Laws in
Several Space Variables}, Applied Mathematical Sciences, Vol. 53, Springer-Verlag,
New York, 1984.

\bibitem{MB}
A. J. Majda and A. L. Bertozzi, \emph{Vorticity and Incompressible Flow}, Cambridge University Press: Cambridge, 2002.

\bibitem{N68}
T. Nishida, Global solution for an initial-boundary value problem of a quasi-linear
hyperbolic system, Proc. Japan Acad. 44 (1968), 642--646.

\bibitem{N78}
T. Nishida, \emph{Nonlinear Hyperbolic Equations and Related Topics in Fluid
Dynamics}, Publ. Math. Orsay, No. 78--02, Universit\'{e} de Paris-Sud, Orsay, 1978.

\bibitem{NS}
T. Nishida and J. A. Smoller, Solutions in the large for some nonlinear hyperbolic
conservation laws, Comm. Pure Appl. Math. 26 (1973), 183--200.

\bibitem{STW}
T. C. Sideris, B. Thomases, and D. Wang, Long time behavior of solutions to the
3D compressible Euler equations with damping, Comm. Partial Differential Equations,
28 (2003), 795--816.

\bibitem{Simon}
J. Simon, Compact sets in the space $L^p(0,T;B)$, Ann. Mat. Pura Appl. 146
(1987), 65--96.

\bibitem{WY}
W. Wang and T. Yang, The pointwise estimates of solutions for Euler equations
with damping in multi-dimensions, J. Differential Equations, 173 (2001), 410--450.

\bibitem{XLY}
L. Xiao, T. Luo, and T. Yang, Global BV solutions of compressible Euler equations
with spherical symmetry and damping, J. Differential Equations, 146 (1998),
203--225.

\bibitem{XZ2024}
J. Xu and J. Zhang, A remark on the multi-dimensional compressible Euler system
with damping in the $L^p$ critical Besov spaces, Proc. Amer. Math. Soc. 152
(2024), 239--252.
	\end{thebibliography}
	
\end{document}